\documentclass[reqno]{amsart}
\usepackage[centering,textwidth=6.5in,textheight=9in, marginparwidth=.8in]{geometry}
\usepackage{amsmath,amssymb,amscd,mathtools}
\usepackage{yhmath}
\usepackage{setspace}

\usepackage{tikz}
\usetikzlibrary{math,angles,quotes,intersections,calc,through}

\usepackage{hyperref}

\parskip=.1in

\newtheorem{thm}{Theorem}[section]
\newtheorem{lem}[thm]{Lemma}
\newtheorem{pro}[thm]{Proposition}

\theoremstyle{definition}
\newtheorem{definition}[thm]{Definition}
\newtheorem{remark}[thm]{Remark}

\numberwithin{equation}{section}

\def\bR{\mathbb{R}}
\def\bT{\mathbb{T}}
\def\bZ{\mathbb{Z}}

\def\cC{\mathcal{C}}

\def\cO{\mathcal{O}}

\def\cS{\mathcal{S}}

\def\hF{\widehat{F}}
\def\hM{\widehat{M}}

\def\pa{\partial}

\def\ds{\displaystyle}

\begin{document}

\title{Hyperbolicity of asymmetric lemon billiards}

\author[X. Jin]{Xin Jin}
\address{Math Department,
Boston College,
Chestnut Hill, MA 02467.}
\email{xin.jin@bc.edu}
\thanks{X. J. is supported in part by  the NSF Grant DMS-1854232.}

\author[P. Zhang]{Pengfei Zhang}
\address{Department of Mathematics,
University of Oklahoma,
Norman, OK 73069.}
\email{pengfei.zhang@ou.edu}

\subjclass[2000]{37D50}

\keywords{billiards, asymmetric lemon,  invariant cone, hyperbolic, defocusing}

\begin{abstract}
Asymmetric lemon billiards was introduced in \cite{CMZZ},
where the billiard table $Q(r,b,R)$ is the intersection of two round disks with radii $r\le R$, respectively,
and $b$ measures the distance between the two centers. 
It is conjectured \cite{BZZ} that the asymmetric lemon billiards is hyperbolic when the arc $\Gamma_r$
is a major arc and $R$ is large. 
In this paper we prove this conjecture for sufficiently large $R$.
\end{abstract}

\maketitle

\section{Introduction}

{\it Dynamical billiards} is a special class of  dynamical systems, 
in which a point particle alternates between moving freely inside a bounded domain $Q$ and
elastic reflections upon hitting the boundary  $\Gamma=\pa Q$.
 The domain $Q$ is called the billiard table.
The dynamical properties of billiards are determined completely by the 
geometric shape of the billiard table.
For example, Jacobi proved the dynamical billiards on an elliptic table is completely integrable.

The study of {\it chaotic billiards} was
pioneered by Sina\v{\i}. In his seminal paper \cite{Si70},
Sina\v{\i} discovered the {\it dispersing mechanism} and proved the hyperbolicity and ergodicity 
of dispersing billiards. The  dispersing mechanism
states that any parallel (divergent) beam of trajectories becomes (more) divergent after 
reflection from a dispersing boundary. See Fig.~\ref{dis-foc}.
Bunimovich \cite{Bu74a} constructed a family of chaotic billiard systems with a mixture 
of dispersing and focusing components.
In \cite{Bu74b} he  constructed a family of chaotic billiard systems with focusing and neutral components only,
and formulated the first version of {\it defocusing mechanism} for chaotic billiards.
The defocusing mechanism have been greatly extended by Wojtkowski \cite{Woj86},
Markarian \cite{Ma88}, Donnay \cite{Do91} and Bunimovich \cite{Bu92}.
Generally speaking,  defocusing mechanism applies 
if  all free paths are long enough such that parallel beams of trajectories, 
becoming convergent after reflection from a focusing boundary, 
pass the convergent points and become divergent.

\begin{figure}[h]
\tikzmath{
\a=150;
\b=170; 
\c=160; 
}
\begin{tikzpicture}
\coordinate (O) at (4,0);
\coordinate (O1) at (1,0);
\filldraw (O) circle(0.03) node[right]{$O$};
\coordinate (A) at ({4+2*cos(\a)},{2*sin(\a)});
\draw (A) arc (\a: 360-\a : 2);
\coordinate (B1) at ({4+2*cos(\b)},{2*sin(\b)});
\draw[dashed, gray] (O) -- ({4+ 2.5*cos(\b)},{2.5*sin(\b)});
\coordinate (B2) at ({4+2*cos(180)},{2*sin(180)});
\draw[dashed, gray] (O) -- ({4+ 2.5*cos(180)},{2.5*sin(180)});
\coordinate (B3) at ({4+2*cos(-\b)},{2*sin(-\b)});
\draw[dashed, gray] (O) -- ({4+ 2.5*cos(-\b)},{2.5*sin(-\b)});
\draw[blue] (B1) -- +({2*cos(\c)},{2*sin(\c)}) (B2) -- +({2*cos(\c)},{2*sin(\c)}) (B3) -- +({2*cos(\c)},{2*sin(\c)});
\draw[red, ->] (B1) -- +({1.1*cos(180)},{1.1*sin(180)});
\draw[red, ->] (B2) -- +({1*cos(-\c)},{1*sin(-\c)});
\draw[red, ->] (B3) -- +({1.2*cos(-140)},{1.2*sin(-140)});
\end{tikzpicture}
\hspace{1in}
\tikzmath{
\a=30;
\b=10; 
\c=160; 
}
\begin{tikzpicture}
\coordinate (O) at (0,0);
\coordinate (O1) at (1,0);
\filldraw (O) circle(0.03) node[left]{$O$};
\coordinate (A) at ({2*cos(\a)},{2*sin(\a)});
\draw (A) arc (\a: -\a : 2);
\coordinate (B1) at ({2*cos(\b)},{2*sin(\b)});
\draw[dashed, gray] (O) -- ({ 2*cos(\b)},{2*sin(\b)});
\coordinate (B2) at ({2*cos(0)},{2*sin(0)});
\draw[dashed, gray] (O) -- ({ 2*cos(0)},{2*sin(0)});
\coordinate (B3) at ({2*cos(-\b)},{2*sin(-\b)});
\draw[dashed, gray] (O) -- ({ 2*cos(-\b)},{2*sin(-\b)});
\draw[blue] (B1) -- +({2*cos(\c)},{2*sin(\c)}) (B2) -- +({2*cos(\c)},{2*sin(\c)}) (B3) -- +({2*cos(\c)},{2*sin(\c)});
\draw[red, ->] (B1) -- +({2*cos(-140)},{2*sin(-140)});
\draw[red, ->] (B2) -- +({1.8*cos(-\c)},{1.8*sin(-\c)});
\draw[red, ->] (B3) -- +({1.7*cos(180)},{1.7*sin(180)});
\end{tikzpicture}
\caption{Reflections of a parallel beam on a dispersing boundary (left) 
and  on a focusing boundary (right), respectively.}
\label{dis-foc}
\end{figure}
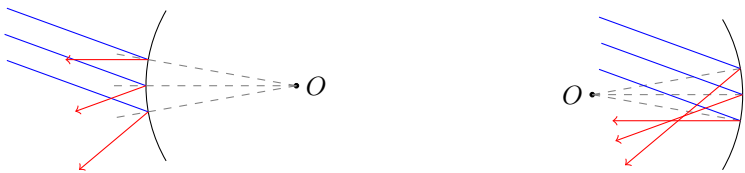

In \cite{HeTo} Heller and Tomsovic studied some lemon-shaped billiard systems,
where the billiard table $Q(b)$
is the intersection of two unit disks whose centers are separated by $b$ units, $0< b < 2$.
Numerical studies have been done extensively for  the lemon billiards 
in relation to the problems of quantum chaos (see \cite{MHA,RR99}). 
Recently, the existence of elliptic islands for lemon billiards has been proved in \cite{OP05}.
In \cite{CMZZ} we considered the asymmetric lemon-shaped billiards,
where the billiard table $Q(r,b,R)$ 
is the intersection of two round disks of radii $r\le R$, respectively, 
whose centers $O_r$ and $O_R$ are separated by $b$ units, $R-r < b< R+r$. 
See Fig.~\ref{asl} for an example of the asymmetric lemon billiard table.
One can assume $r=1$ without losing any generality. 
We will keep using $r$ to emphasize the role of the radius $r$, although $r=1$.

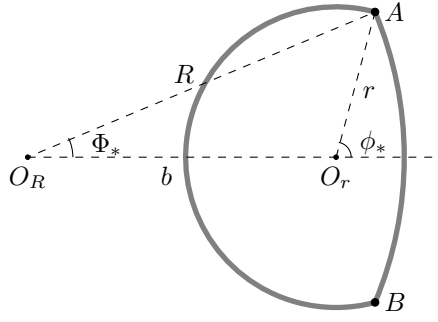
\begin{figure}[h]
\tikzmath{
\a=75;
\b=22.73; 
}
\begin{tikzpicture}
\coordinate (OR) at (0,0);
\filldraw (OR) node[below] {$O_R$} circle(0.03);
\coordinate (Or) at (4.1,0);
\filldraw (Or) node[below] {$O_r$} circle(0.03);
\coordinate (A) at ({4.1+2*cos(\a)},{2*sin(\a)});
\draw[gray, line width=2] (A) arc (\b: -\b :5);
\draw[gray, line width=2] (A) arc (\a:(360 - \a):2);
\filldraw (A) circle(.05)  node[right] {$A$};
\coordinate (B) at ({4.1+2*cos(-\a)},{2*sin(-\a)});
\filldraw (B) circle(.05)  node[right] {$B$};
\draw[dashed] (OR) -- (Or) node[pos=0.45,below]{$b$} -- (5.5,0); 
\draw[dashed] (Or) -- (A)  node[pos=0.45,right]{$r$} -- (OR)  node[pos=.55,above]{$R$};
\draw (0.6, 0) arc (0:25:0.6);
\node (Phi) at (1.05,.16){$\Phi_{\ast}$};
\draw (4.3, 0) arc (0:80:0.2);
\node (phi) at (4.6,.2){$\phi_{\ast}$};
\end{tikzpicture}
\caption{An asymmetric lemon table $Q(r,b,R)$.}
\label{asl}
\end{figure}

Given an asymmetric lemon table $Q(r,b,R)$, let $A,B$ be the two corners
where the two circular arcs $\Gamma_r$ and $\Gamma_R$ meet at, 
$\phi_{\ast}$ and $\Phi_{\ast}$ be the position angle of the point $A$ with respect to $O_r$
and $O_R$, respectively. 
Then the three parameters $r, b, R$ are related in the following way:
\begin{align}
R\sin\Phi_{\ast}= r\sin\phi_{\ast}, \quad
R\cos\Phi_{\ast}=b+ r\cos\phi_{\ast}. 
\label{rbR}
\end{align}
To make the corners of the table $Q(r,b,R)$ fixed at the given points $A$ and $B$ on $\pa D_r$,
we have 
\begin{align}
b= R\cos\Phi_{\ast} - r\cos\phi_{\ast}
=(R^2 - r^2\sin^2\phi_{\ast})^{1/2}  - r\cos\phi_{\ast}.
\label{distance}
\end{align}
Given $\phi_\ast \in (0, \frac{\pi}{2})$, let 
$Q(\phi_{\ast},R):=Q(1,b(\phi_{\ast}, R),R)$
be the family of asymmetric lemon tables with corners fixed at the two points $A$  and $B$,
where $b=b(\phi_{\ast}, R)$ is given by \eqref{distance}.

Note that when $R\to\infty$, the table $Q(\phi_{\ast},\infty)$ turns out to be
a flower table with one petal constructed by Bunimovich \cite{Bu74b, Bu79}.
It is proved that the one-petal billiards $Q(\phi_{\ast},\infty)$ is hyperbolic and ergodic.
The asymmetric lemon billiards $Q(\phi_{\ast},R)$, especially when $R\gg r$, 
may be viewed as a small {\it geometric deformation} of  $Q(\phi_{\ast},\infty)$.
However, a geometric deformation of the configuration space of a Hamiltonian system, 
no matter how small it is,
leads to a global change of the billiard map on the phase space. 
It has been observed  numerically in \cite{CMZZ} that there is an infinite strip
in the parameter space $\{(b,R): 1< b<R\}$ such that the asymmetric lemon billiards 
in that strip is ergodic. 
 In \cite{BZZ} we conjectured that if $\Gamma_r$ is a major arc
 (in the sense that the arc-length $|\Gamma_r|> \pi$), 
 then the asymmetric lemon billiards $Q(\phi_{\ast},R)$ 
is hyperbolic for large $R$. In \cite{BZZ} 
we have proved the hyperbolicity under the assumption that $\phi_{\ast}\in (0, \pi/6)$. 
We can remove this assumption now:

\begin{thm}\label{mainthm}
Let  $\phi_{\ast}\in(0,\pi/2)$, and  $Q(\phi_{\ast},R)$ be an asymmetric lemon table. 
Then for any 
$R\ge R(\phi_{\ast}):=\max\big\{ \frac{14.6r}{\min\{\phi_{\ast}, \frac{\pi}{2}-\phi_{\ast}\}\cdot \sin\phi_{\ast}}, \frac{147 r}{\sin^2\phi_{\ast}}, 1773.7 r\big\}$,
the asymmetric lemon billiards $Q(\phi_{\ast}, R)$ is hyperbolic.
\end{thm}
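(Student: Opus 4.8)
The plan is to establish hyperbolicity via the standard invariant-cone method of Wojtkowski and Markarian: I will exhibit a measurable family of cones in the tangent spaces of the billiard phase space that is eventually strictly invariant under the billiard map, which by the Wojtkowski--Katok--Strelcyn machinery yields nonvanishing Lyapunov exponents almost everywhere. Concretely, I would work with the $(s,\varphi)$ or curvature coordinates on the collision space, track the evolution of an infinitesimal beam through its "line element" (the quantity $\mathcal{B}=\varphi'/\text{(collision terms)}$, or equivalently the signed curvature of the wavefront), and use the classical formulas for how this curvature transforms along a free flight of length $\tau$ (the map $\mathcal{B}\mapsto \mathcal{B}/(1+\tau\mathcal{B})$ in suitable normalization) and across a reflection off an arc of radius $\rho$ with incidence angle $\phi$ (the jump by $2/(\rho\cos\phi)$, with the appropriate sign for the dispersing arc $\Gamma_R$ as seen from inside $Q$).

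The geometry here is that of a one-petal--type table: the small arc $\Gamma_r$ is focusing and major, while the large arc $\Gamma_R$, having radius $R\gg r$, is very weakly focusing and behaves almost like a flat (neutral) component, so that $Q(\phi_\ast,R)$ is a small geometric perturbation of Bunimovich's flower table $Q(\phi_\ast,\infty)$, which is known to be hyperbolic. The key steps, in order, are: (1) partition the phase space according to which arc a collision lies on, and estimate from below and above the free-flight times $\tau$ between successive collisions — in particular showing that a trajectory hitting $\Gamma_r$ must travel a chord long enough for the defocusing mechanism to act, and controlling the short chords that run from $\Gamma_R$ back to $\Gamma_R$ or the near-corner orbits; (2) on $\Gamma_r$ define an invariant cone exploiting that $\Gamma_r$ is a major arc (this is exactly where the restriction $\phi_\ast<\pi/6$ in \cite{BZZ} came from, so the new input must be a sharper cone or a multi-step argument handling $\phi_\ast\in[\pi/6,\pi/2)$); (3) check that the (nearly neutral) reflections off $\Gamma_R$, together with the long intervening free flights scaling like $R$, do not destroy the cone — here the $R\ge R(\phi_\ast)$ hypothesis enters quantitatively, with the three explicit thresholds coming from (i) a uniform lower bound on how far collisions on $\Gamma_r$ sit from the corners, forcing $\phi$ bounded away from $0$ and $\pi/2$, (ii) control of the curvature of $\Gamma_R$ being small compared to the relevant free paths, and (iii) an absolute smallness needed to close the whole cone-invariance inequality; (4) rule out the degenerate possibility that the cone is eventually preserved but never strictly contracted — i.e., verify the "eventual strict invariance" on a positive-measure set, excluding the measure-zero family of marginal orbits (e.g. the two-periodic orbit bouncing between the arcs, or orbits tangent to $\Gamma_R$).

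I expect the main obstacle to be precisely step (2)–(3) in the regime $\phi_\ast$ close to $\pi/2$: when $\phi_\ast$ is near $\pi/2$ the arc $\Gamma_r$ is "barely" major, the corners $A,B$ sit near the equator of $D_r$, and trajectories can hit $\Gamma_r$ at grazing-type angles or undergo long runs of consecutive collisions confined to a small neighborhood of $\Gamma_r$ — these are exactly the orbit segments for which the defocusing bookkeeping is delicate, and the naive single-reflection cone estimate fails. The way to push through is to enlarge the time window: instead of asking the cone to be strictly invariant under one iterate of the billiard map, track a trajectory over a full "excursion" (a maximal block of collisions on $\Gamma_r$ followed by an excursion to $\Gamma_R$ and back) and show the composite derivative cocycle strictly expands the cone, absorbing the bad short-time behavior into a uniformly bounded number of steps whose product is still expanding once $R$ is large. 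Extracting honest numerical constants like $14.6$, $147$ and $1773.7$ will require carefully tracking the constants through the chord-length estimates and the quadratic-form inequality defining cone invariance, but no conceptually new ingredient beyond a quantitative, multi-step version of the Wojtkowski cone argument adapted to the asymmetric lemon geometry.
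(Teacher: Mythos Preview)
Your overall strategy --- Wojtkowski cones, pass to a first-return map over a subset of the $\Gamma_r$ phase space, and verify cone invariance along full excursions to $\Gamma_R$ and back --- is exactly the paper's approach, and the multi-step idea is the right instinct. But one geometric misconception would derail your execution, and one structural ingredient is missing.

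\textbf{The misconception.} You write of ``long intervening free flights scaling like $R$.'' There are none: $Q(\phi_\ast,R)\subset D(O_r,r)$, so every free flight has length at most $2r$, independently of $R$. The paper even remarks that Wojtkowski's condition $\tau(x)>d(x)+d(Fx)$ is violated by the \emph{opposite} inequality here. So the classical defocusing picture (wavefront defocused by a long free run) is simply not what is happening, and an argument organized around it will not close. What \emph{does} become large with $R$ is $d_1=R\sin\theta_1$ for transversal hits on $\Gamma_R$; the paper's analysis is a direct sign analysis of the four entries of the product tangent matrix $D_{x_0}F^{n_1+2}$ (and, when needed, of $D_{F^{-1}x_0}F^{n_1+4}$), showing they are all of one sign. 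A key feature you do not anticipate is that this common sign \emph{flips}: segments with a single reflection on $\Gamma_R$ are negatively defocusing, those with $n_1\ge 1$ are positively defocusing. This is why the cone used is the constant symmetric quadrant $\{uv\ge 0\}$; no basepoint-dependent cone is needed.

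\textbf{The missing ingredient.} The excursion $(x_0,x_1,\ldots,x_2)$ alone is \emph{not} always defocusing: when $d_1<2r$ (the orbit is nearly tangent to $\Gamma_R$) one must borrow an extra $\Gamma_r$-reflection on each side and work with $(F^{-1}x_0,\ldots,Fx_2)$. Ensuring these extra reflections lie inside a single return block is the reason the paper does not simply induce on $M_r^{in}$ but on a carefully shuffled set $\hM$ (their Eq.~(3.3)). The accompanying Lemma~3.4 shows that $d_1<2r$ forces $x_0\in V(\delta_\ast)$ and $x_2\in U(\delta_\ast)$ (near-corner neighborhoods), which is what makes the shuffling work (Prop.~3.5). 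This is the concrete replacement for your step (2)--(3), and without it the cone argument fails precisely on the near-tangent orbits. The explicit constants $14.6$, $147$, $1773.7$ arise from these near-corner localization estimates and from the $n_1=1$ matrix-entry inequalities, not from any free-flight length bound.
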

For example,  when $\phi_{\ast}\in [\frac{\pi}{6}, \frac{\pi}{2} - 0.0083]$, 
the asymmetric lemon billiards $Q(\phi_{\ast}, R)$ is hyperbolic for $R \ge 1773.7 r$.

The  strategy of our proof of Theorem \ref{mainthm} is 
to construct a mensurable cone field on the phase space
that is invariant and eventually strictly invariant under the billiard map.
Then the hyperbolicity of asymmetric lemon billiards follows from a classical result of
Wojtkowski \cite{Woj85}.

\begin{remark}
Note that the lower bound $ R(\phi_{\ast}) \to \infty$ when $\phi_{\ast} \to \frac{\pi}{2}$.
This is compatible with the fact that  the  table  $Q( \frac{\pi}{2}, \infty)$ 
is a semidisk, and the dynamical billiards on a semidisk is completely integrable.
\end{remark}

\begin{remark}
Note that $ R(\phi_{\ast}) \to \infty$ when $\phi_{\ast} \to 0$.
In this case, better reduction schemes exist \cite{BZZ}, 
and the estimates on $R(\phi_{\ast})$ could be significantly improved. 
\end{remark}

\begin{remark}
Recall the strictly convex scattering condition formulated by Wojtkowski \cite{Woj86}:
\begin{align}\label{Wcondition}
\tau(x) > d(x) + d(Fx).
\end{align}
See Section \ref{prelim} for the definitions of these notations.
It is interesting to note that the  exactly opposite 
inequality holds for asymmetric lemon billiards: $\tau(x)\le d(x) + d(Fx)$. 
This is due to the fact that $Q(r,b,R)$ is the {\it intersection} of two disks. 
\end{remark}

\section{Preliminaries}
\label{prelim}

In this section we give some preliminary results about asymmetric lemon billiards.
See \cite{CMZZ, BZZ} for more details. For general planar billiards, see \cite{CM06}.
 
 \subsection{The asymmetric lemons}
Let $Q(r,b,R)=D(O_r,r) \cap D(O_R, R)$ be the intersection of two disks of radii 
$r$ and $R$, respectively, where $b=|O_rO_R|$ is the distance between the two centers. 
The boundary $\Gamma =\pa Q(r,b,R)$ consists 
of two circular arcs $\Gamma_r$ and $\Gamma_R$,
with two corners at $A, B \in \pa D(O_r,r) \cap \pa D(O_R, R)$. 
We assume $R-r<b < R+r$ so that the intersection $Q(r,b,R)$ is a nontrivial (asymmetric) lemon.
Note that the orbit $\cO(2)$ passing through the two centers $O_r$ and $O_R$ is periodic
of period $2$. It follows from \cite{Woj86,DOP03,OP05}
that this orbit is elliptic and nonlinearly stable if $b<r$ or $b>R$, parabolic if $b=r$ or $b=R$,
and hyperbolic if $r<b<R$. 
So $r\le b \le R$ is a necessary condition for the asymmetric lemon billiards $Q(r,b,R)$ 
to be hyperbolic.

\subsection{The phase space}
To describe the phase space of asymmetric lemon billiards, we first parametrize the 
boundary $\Gamma =\Gamma_r \cup \Gamma_R$. For each point $P\in \Gamma_r$,
we let $\phi(P)\in \bT=\bR/2\pi$ be the angle from the vector $\overrightarrow{O_RO_r}$ 
to the vector $\overrightarrow{O_rP}$ (counterclockwise oriented). 
Similarly, for each point $P\in \Gamma_R$,
we let $\phi(P)\in \bT=\bR/2\pi$ be the angle from the vector $\overrightarrow{O_RO_r}$ 
to the vector $\overrightarrow{O_RP}$ (counterclockwise oriented).
Then we have $\phi(A) = \phi_{\ast}$, $\phi(B)= 2\pi -\phi_{\ast}$,
$\Phi(A) = \Phi_{\ast}$ and $\Phi(B)= -\Phi_{\ast}$. See Fig.~\ref{asl}.
Both corners $A$ and $B$ will be treated as   points on $\Gamma_r$. 
It follows that that $\Gamma_r=[\phi_{\ast}, 2\pi -\phi_{\ast}]$, $\Gamma_R=(-\Phi_{\ast}, \Phi_{\ast})$,
and $\Gamma=[\phi_{\ast}, 2\pi -\phi_{\ast}] \sqcup (-\Phi_{\ast}, \Phi_{\ast})$ (a disjoint union).

Let $T_{\Gamma}\bR^2$ be the set of tangent vectors over points in $\Gamma$.
The phase space $M\subset T_{\Gamma}\bR^2$ of  the asymmetric lemon billiards consists of 
unit vectors $x\in T_{\Gamma}\bR^2$ that point to the inside of the table $Q(r,b,R)$.
Let $p: M\to \Gamma$ be the projection from $M$ to $\Gamma$.
For each $x\in M$, let $\phi(x)\in [\phi_{\ast}, 2\pi -\phi_{\ast}]\sqcup (-\Phi_{\ast}, \Phi_{\ast})$ be  the position coordinate 
of $p(x)\in\Gamma$, and let $\theta(x) \in (0,\pi)$ be the angle 
from the positive tangent direction of $\Gamma$ at $p(x)$ to $x$.
By identifying $x$ with $(\phi(x),\theta(x))$,
we get a parametrization of the phase space 
$M=M_r \sqcup M_R$,
where $M_r=[\phi_{\ast}, 2\pi -\phi_{\ast}] \times  (0,\pi)$ and $M_R=(-\Phi_{\ast}, \Phi_{\ast}) \times  (0,\pi)$.

\subsection{The billiard map}\label{defmap}
Let $(\phi_0, \theta_0)\in M$. This corresponds to a unit vector $x_0\in T_{\Gamma}\bR^2$ pointing
to the inside of $Q(r,b,R)$. 
Suppose the ray $\bR_+\langle x_0 \rangle$ crosses $\Gamma$ at a point other than the two corners, 
say $\phi_1$. Then the ray make an elastic reflection with respect to the tangent line of $\Gamma$ at $\phi_1$. 
Let $\theta_1$
be the new direction coordinate with respect to the positive tangent direction of $\Gamma$ at $\phi_1$.
Then the map $F: M\to M$, $x_0=(\phi_0, \theta_0) \mapsto x_1=(\phi_1, \theta_1)$ is the billiard map on $M$. 

Note that the tangent bundle of $\Gamma$ is not continuous at the two corners $A$ and $B$.
Therefore, the map $F$ is not smooth (maybe even undefined)  if either $p(x_0)\in \{A, B\}$ or $p(x_1) \in  \{A, B\}$.
Let $\cS_1$ be the set of points $x\in M$ where $F$ is not smooth,
which is called the {\it singularity set} of $F$.
It is easy to see that $\cS_1$ consists of  $\{\phi_{\ast},  2\pi -\phi_{\ast}\}\times  (0,\pi)$  and
four skew segments in the interior of $M$ (two in $M_r$ and the other two in $M_R$).

For any $x_0=(\phi_0, \theta_0) \in M \backslash \cS_1$,
let $x_1=(\phi_1, \theta_1)=F x_0$,
$\tau(x_0)$ be the Euclidean distance from the initial point $p(x_0)$ to the terminal point $p(x_1)$, 
$r_i \in\{r,R\}$ be the radius of the arc containing $p(x_i)$, $i=0, 1$. 
Wojtkowski \cite{Woj86} introduce a function $d: M\to \bR$, 
where $d(x) = r\sin\theta$ if $x= (\phi, \theta) \in M_r$, and $d(x) = R\sin\theta$ if $x= (\phi, \theta) \in M_R$.  
The geometric meaning of $d(x)$ is the half length of the chord along the trajectory of
$x$ in the osculating circle of $\Gamma$ at $p(x)$.
Then the tangent map of the billiard map $F$ at a point $x\in M$ is given by
\begin{align}\label{tangent}
D_{x_0}F=\frac{1}{d(x_1)}
\begin{bmatrix} \tau(x_0) -d(x_0) & \tau(x_0) \\ 
\tau(x_0) -d(x_0) -d(x_1) & \tau(x_0) - d(x_1) \end{bmatrix}.
\end{align}
For example, if $p(x_i)$, $i=0,1$ are on one circular arc,
then $\tau(x_0) = 2d(x_0) =2d(x_1)$, and  $D_{x_0}F=\begin{bmatrix} 1 & 2 \\ 0 & 1 \end{bmatrix}$. 
Note that \eqref{Wcondition} is equivalent to that
all four entries of Eq.~\eqref{tangent} are positive.

The billiard map $F$ on $M$ preserves the 2-form $\omega=\rho(\phi) \sin\theta\, d\phi \wedge d\theta$,
$\rho(\phi)$ is the radius of curvature of $\pa Q$ at $\phi$.
Therefore, it preserves the corresponding probability measure $\mu$ on $M$, where 
\begin{align}
d\mu= C\cdot \rho(\phi) \sin\theta\, d\phi \,d\theta,
\end{align}
where $C=\frac{1}{2|\Gamma|}$ is a normalizing constant such that $\mu(M)=1$.
Note that $\mu(\cS_1)=0$. 

It follows from Oseledets {\it Multiplicative Ergodic Theorem} 
that  the limit $\ds \chi(x,F)=\lim_{n\to\infty}\frac{1}{n}\log\|D_xF^n\|$  exists for $\mu$-a.e. $x\in M$,
which is called  the {\it Lyapunov exponent} of the billiard map $F$ at $x$. 
Then $x\in M$ is a {\it hyperbolic point} of $F$ if $\chi(x,F)>0$,
and the dynamical billiards is said to be {\it hyperbolic} if $\mu$-a.e. $x\in M$ is 
a hyperbolic point for the billiard map $F$. 

\subsection{Time reversibility} 
Consider the map
$I: M\to M$, $(\phi, \theta) \mapsto (\phi, \pi-\theta)$. 
This is an involution since $I^2(\phi, \theta)=(\phi, \theta)$.
The billiard map $F$ is time-reversible. That is,
$F^{-1}\circ I = I\circ F$.

\section{Defocusing segments and  Hyperbolicity}
\label{proofofmain}

Let $\phi_{\ast }\in (0, \frac{\pi}{2})$, 
$Q(\phi_{\ast},R)$ be the asymmetric lemon table such that two arcs $\Gamma_r$
and $\Gamma_R$ intersect at the two points $A$ and $B$ with coordinates $\phi(A)=\phi_{\ast}$
and  $\phi(B)= 2\pi -\phi_{\ast}$, respectively.
Note that the boundary component $\Gamma_r$ is a major arc.
We  mainly use the $\Gamma_r$-part $M_r\subset M$ of the phase  space of the billiards $Q(\phi_{\ast},R)$,
as this part stays unchanged when we adjust the value $R$.

We will introduce  a subset $\hM\subset M_r$ and consider the first return map $\hF $ of 
the billiard map $F$ with respect to $\hM$ in \S \ref{return}.
Some preparation is need to define this subset $\hM$. We start with two subsets of $M_r$:

\begin{enumerate}
\item $M_r^{in}:=M_r \cap FM_R$, which is the set of points $x\in M_r$ with $F^{-1}x \in M_R$;

\item $M_r^{out}:=M_r \cap F^{-1}M_R$, which is the set of points $x\in M_r$ with $Fx \in M_R$.
\end{enumerate}
It is easy to see that $I(M_r^{in})= M_r^{out}$ due to the time-reversibility  of the billiard map $F$.
We define the subsets $M_R^{in}, M_R^{out}\subset M_R$ in a similar way.
Then  $I(M_R^{in})= M_R^{out}$.
See Fig.~\ref{partition}.

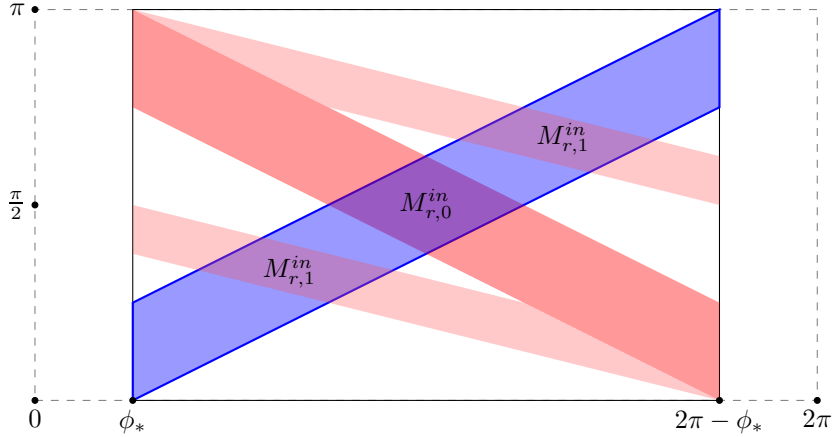
\begin{figure}[h]
\tikzmath{
\a=1;
\b=4;
\c=2*\b-\a;
}
\begin{tikzpicture}[scale=1.3]
\draw[dashed,gray] (0,0) rectangle (2*\b,\b);
\draw (\a,0) rectangle (\c, \b);
\draw[thick, blue] (\a,0) -- (\c, \b - \a) -- (\c, \b) -- (\a, \a) -- cycle;
\fill[red, fill opacity=0.4] (\a,\b) -- (\c, \a) -- (\c, 0) -- (\a,\b - \a) -- cycle;
\fill[blue, fill opacity=0.4] (\a,0) -- (\c, \b - \a) -- (\c, \b) -- (\a, \a) -- cycle;
\fill[red!70, fill opacity=0.3]  (\c, 0) -- (5, 1) -- (\a,2) -- (\a, 1.5) -- cycle;
\fill[red!70, fill opacity=0.3]  (\a, \b) -- (3, 3) -- (\c,2) -- (\c, 2.5) -- cycle;
\node (M0) at (\b,{0.5*\b}){$M_{r,0}^{in}$};
\node (M1a) at ({0.65*\b},{0.33*\b}){$M_{r,1}^{in}$};
\node (M1b) at ({1.35*\b},{0.67*\b}){$M_{r,1}^{in}$};
\filldraw (0,0) circle(.03)  node[below] {$0$};
\filldraw (2*\b,0) circle(.03)  node[below] {$2\pi$};
\filldraw (0,\b) circle(.03)  node[left] {$\pi$};
\filldraw (0,0.5*\b) circle(.03)  node[left] {$\frac{\pi}{2}$};
\filldraw (\a, 0) circle(.03)  node[below] {$\phi_{\ast}$};
\filldraw (\c, 0) circle(.03)  node[below] {$ 2\pi -\phi_{\ast}$};
\end{tikzpicture}
\caption{The set $M_r \subset \bR/2\pi\times [0,\pi]$.
The blue parallelogram is $M_r^{in}$, and the  red parallelogram is $M_{r}^{out}$.}
\label{partition}
\end{figure}

Let $x\in M_r^{in}$, $n(x)=\inf\{n\ge 0: F^{n} x\in M_r^{out}\}$, which represents 
the number of {\it remaining} reflections the orbit of $x$ has on $\Gamma_r$.
Let $M_{r,n}^{in}=\{x\in M_r^{in}: n(x)=n\}$, $n\ge 0$. 
This provides a partition of $M_r^{in}$. 
It follows from the definition that $M_{r,n}^{out} :=F^n (M_{r,n}^{in}) \subset M_{r}^{out}$ for each $n\ge 0$,
and together they form a partition of $M_{r}^{out}$.
Note that 
\begin{itemize}
\item $M_{r,n}^{out} =I(M_{r,n}^{in})$ by the time reversibility of the billiard map;

\item $M_{r,0}^{in} = M_{r,0}^{out} =M_r^{in}\cap M_r^{out}$;

\item $M_{r,n}^{in}$ has at least two connected components for each $n\ge 1$.
\end{itemize}

\subsection{Frequently used notations}\label{shortn}
The following notations will be used throughout the rest of the paper.
Given a point $x\in M_r$,
let $n_0=\inf\{n\ge 0: F^{n}x\in M_r^{out}\}$ be the number of remaining reflections
of the point $x$ has on $\Gamma_r$. Note that $n_0<\infty$ for every $x\in M_r$
except  finitely many segments of periodic points that never leave $\Gamma_r$.
Let
$x_0=(\phi_0 ,\theta_0):= F^{n_0}x\in M_r^{out}$, and  $x_1=(\phi_1, \theta_1):= F x_0  \in M_R^{in}$.
Let $n_1=\inf\{n\ge 0: F^{n}x_1 \in M_R^{out}\}$ be the number of remaining reflections
of $x_1$ has on $\Gamma_R$.
Then $F^{n_1}x_1\in M_R^{out}$ and
$x_2=(\phi_2 ,\theta_2):= F^{n_1+1}x_1\in M_r^{in}$.
Let $\tau_0$ be the distance from $p(x_0) \in \Gamma_r$ to $p(x_{1})\in\Gamma_R$,
and  $\tau_1$ be the distance from $p(F^{n_1}x_1) \in\Gamma_R$ to $p(x_{2})\in \Gamma_r$.
Let $d_0 =d(x_0) = r\sin\theta_0$, $d_1=d(x_1) = R \sin \theta_1$
and $d_2 =d(x_2) = r\sin\theta_2$. Then
$(x,\dots, x_0, x_1, \dots, F^{n_1}x_1, x_2)$ is an orbit segment of the billiard map $F$.
We have suppressed the dependence of 
these objects  on the point $x\in M_r$.

\subsection{Small neighborhood of points whose trajectories are close to
the chord}\label{nbhd}
Let $x_{\ast}=( 2\pi -\phi_{\ast}, \phi_{\ast})$ and $y_{\ast}=(\phi_{\ast}, \pi-\phi_{\ast})$ be two points in $M_r^{out}$ 
whose trajectories coincide with the chord $AB$. 
Then the two points $I y_{\ast}= (\phi_{\ast}, \phi_{\ast})$
and $I x_{\ast}= ( 2\pi -\phi_{\ast}, \pi-\phi_{\ast})$ are  in $M_r^{in}$. See Fig.~\ref{UVdelta}.

\begin{figure}[h]
\tikzmath{
\a=1;
\b=4;
\c=2*\b-\a;
}
\begin{tikzpicture}[scale=1.3]
\draw[gray, dashed] (0,0) rectangle (2*\b,\b);
\draw (\a,0) rectangle (\c, \b);
\filldraw[red, fill opacity=0.4] (\a,\b) -- (\c, \a) -- (\c, 0) -- (\a,\b - \a) -- cycle;
\filldraw[blue, fill opacity=0.4] (\a,0) -- (\c, \b - \a) -- (\c, \b) -- (\a, \a) -- cycle;
\coordinate (A0) at (\a,0);
\coordinate (B0) at (\c,0);
\coordinate (A1) at (\a,\b);
\coordinate (B1) at (\c,\b);
\filldraw (0,0) circle(.03)  node[below] {$0$};
\filldraw (0,\b) circle(.03)  node[left] {$\pi$};
\filldraw (0,0.5*\b) circle(.03)  node[left] {$\frac{\pi}{2}$};
\filldraw (2*\b,0) circle(.03)  node[below] {$2\pi$};
\filldraw (A0) circle(.03)  node[below] {$\phi_{\ast}$};
\filldraw (B0) circle(.03)  node[below] {$ 2\pi -\phi_{\ast}$};
\coordinate (x1) at (\c, \a);
\coordinate (y1) at (\a, \b - \a);
\coordinate (x2) at (\a,\a);
\coordinate (y2) at (\c, \b -\a);
\fill (x1) circle(.03)  node[right] {$x_{\ast}$};
\fill (y1) circle(.03)  node[left] {$y_{\ast}$};
\fill (x2) circle(.03)  node[left] {$I y_{\ast}$};
\fill (y2) circle(.03)  node[right] {$I x_{\ast}$};
\fill[red] (x1) -- (\c, \a - .4) arc (270: 154: .4) -- cycle;
\fill[red] (y1) -- (\a, \b - \a + .4) arc (90: -26: .4) -- cycle;
\fill[blue] (y2) -- (\c, \b -\a + .4) arc (90: 210: .4) -- cycle;
\fill[blue] (x2) -- (\a, \a  - .4) arc (-90: 26: .4) -- cycle;
\end{tikzpicture}
\caption{The blue region is $M_{r}^{in}$ and the red region is $M_{r}^{out}$.
The set $U(\delta)$ is the union of the two blue sectors, and  $V(\delta)$ is the union of two red sectors.}
\label{UVdelta}
\end{figure}
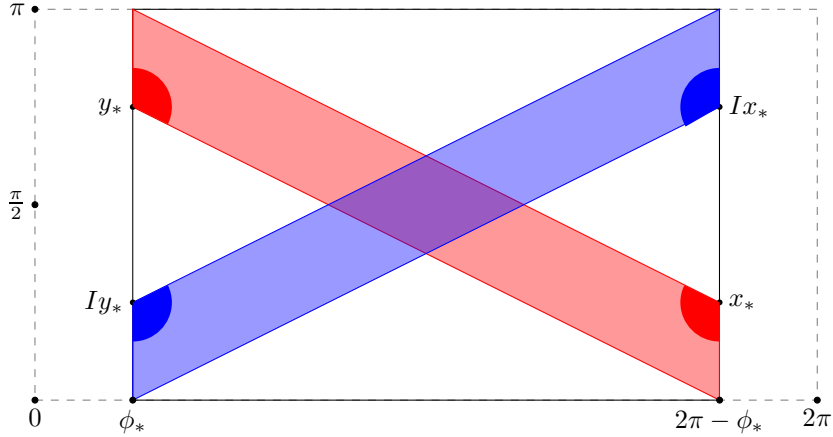

Given $\delta>0$,
let $V(x_{\ast},\delta) = B(x_{\ast}, \delta) \cap M_{r}^{out}$,
$V(y_{\ast},\delta) = B(y_{\ast}, \delta) \cap M_{r}^{out}$, and
$V(\delta) = V(x_{\ast},\delta) \cup V(y_{\ast},\delta)$ be 
the  $\delta$-neighborhood of 
$\{x_{\ast}, y_{\ast}\}$ in $M_{r}^{out}$.
Similarly, let $U(I x_{\ast},\delta) = B(I x_{\ast}, \delta) \cap M_{r}^{in}$,
$U(I y_{\ast},\delta) = B(I y_{\ast}, \delta) \cap M_{r}^{in}$, and
$U(\delta) = U(I x_{\ast},\delta) \cup U(I y_{\ast},\delta)$ be 
the  $\delta$-neighborhood of 
$\{I x_{\ast}, I y_{\ast}\}$ in $M_{r}^{in}$.
It is easy to see that $V(\delta)  = I(U(\delta))$.

\begin{pro}\label{UV}
Let $0<\delta \le \min\{\phi_{\ast}, \frac{\pi}{2}-\phi_{\ast}\}$ be given. Then we have
\begin{enumerate}
\item $U(\delta) \subset \bigcup_{n\ge 1} M_{r,n}^{in}$ and $V(\delta) \subset \bigcup_{n\ge 1} M_{r,n}^{out}$;

\item $U(\delta)\cap V(\delta) =\emptyset$ and $F(U(\delta)) \cap V(\delta)= \emptyset$.
\end{enumerate}
\end{pro}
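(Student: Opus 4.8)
\emph{The plan.} I would reduce everything to one rigid description of $F$ and then do bookkeeping. Since $Q=D(O_r,r)\cap D(O_R,R)$ is convex and the corners $A,B$ are the only common points of $\pa D(O_r,r)$ and $\pa D(O_R,R)$, every point of $\Gamma_r$ whose position angle lies strictly between $\phi_\ast$ and $2\pi-\phi_\ast$ lies in the \emph{open} disk $D(O_R,R)$. Hence the chord of $\pa D(O_r,r)$ joining two such points lies in $\overline Q$, meets $\pa Q$ only at its two endpoints (both on $\Gamma_r$), and has its interior in the interior of $Q$; in particular it is a nonsingular billiard segment from $\Gamma_r$ to $\Gamma_r$. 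Since along a circular arc the billiard map preserves $\theta$ and advances $\phi$ by $2\theta$ (the special case recorded just after \eqref{tangent}, where $\tau=2d_0=2d_1$), this yields the fact I will call $(\star)$: \emph{if $\phi\in(\phi_\ast,2\pi-\phi_\ast)$, $\theta\in(0,\pi)$, and $\phi+2\theta\pmod{2\pi}$ also lies in $(\phi_\ast,2\pi-\phi_\ast)$, then $F(\phi,\theta)=(\phi+2\theta,\theta)$, a point in the interior of $M_r$.}

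\emph{Reduction.} Using $I(M_r^{in})=M_r^{out}$, $V(\delta)=I(U(\delta))$, and the identities $\bigcup_{n\ge1}M_{r,n}^{in}=M_r^{in}\setminus M_r^{out}$, $\bigcup_{n\ge1}M_{r,n}^{out}=M_r^{out}\setminus M_r^{in}$, the proposition will follow once I prove
\[
\text{(a)}\ \ U(\delta)\cap M_r^{out}=\emptyset,\qquad\text{(b)}\ \ F(U(\delta))\cap V(\delta)=\emptyset .
\]
Indeed (a) gives $U(\delta)\subset\bigcup_{n\ge1}M_{r,n}^{in}$ at once and, after applying $I$, $V(\delta)\cap M_r^{in}=\emptyset$, hence $V(\delta)\subset\bigcup_{n\ge1}M_{r,n}^{out}$; then $U(\delta)\cap V(\delta)\subset U(\delta)\cap M_r^{out}=\emptyset$.

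\emph{Carrying out (a) and (b).} I would write $U(\delta)=U(Iy_\ast,\delta)\cup U(Ix_\ast,\delta)$ and treat the first piece, the second being its mirror image under the reflection of the table across the line $O_rO_R$ (which swaps $A\leftrightarrow B$). For $y=(\phi,\theta)\in U(Iy_\ast,\delta)$ one has, off the corner, $\phi\in(\phi_\ast,\phi_\ast+\delta)$ and $\theta\in(\phi_\ast-\delta,\phi_\ast+\delta)$, so $\phi+2\theta\in(3\phi_\ast-2\delta,\,3\phi_\ast+3\delta)$; here $\delta\le\phi_\ast$ gives $3\phi_\ast-2\delta\ge\phi_\ast$, and $\delta\le\tfrac\pi2-\phi_\ast$ with $\phi_\ast<\tfrac\pi2$ gives $3\phi_\ast+3\delta\le\tfrac32\pi<2\pi-\phi_\ast$, so $(\star)$ applies and $Fy=(\phi+2\theta,\theta)$ lies in the interior of $M_r$. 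This already gives $y\notin M_r^{out}$, proving (a) on this piece. For (b) I must check $Fy\notin B(x_\ast,\delta)\cup B(y_\ast,\delta)\supseteq V(\delta)$: the $\theta$-coordinate of $Fy$ is $<\phi_\ast+\delta\le\tfrac\pi2\le\pi-\phi_\ast-\delta$, hence more than $\delta$ from the $\theta$-coordinate of $y_\ast$; and the $\phi$-coordinate of $Fy$ is $<3\phi_\ast+3\delta\le\tfrac32\pi\le2\pi-\phi_\ast-\delta$, hence more than $\delta$ from the $\phi$-coordinate of $x_\ast$. For the $U(Ix_\ast,\delta)$ piece the same inequalities give $\phi+2\theta\pmod{2\pi}\in(2\pi-3\phi_\ast-3\delta,\,2\pi-3\phi_\ast+2\delta)\subset(\phi_\ast,2\pi-\phi_\ast)$, so $Fy$ has $\phi$-coordinate in that interval and $\theta$-coordinate in $(\pi-\phi_\ast-\delta,\pi-\phi_\ast+\delta)$, and $\theta(Fy)-\phi_\ast\ge\pi-2\phi_\ast-\delta\ge\delta$ together with $\phi(Fy)-\phi_\ast\ge2\pi-4\phi_\ast-3\delta\ge\delta$ (both from $\delta\le\tfrac\pi2-\phi_\ast$) keep $Fy$ out of $B(x_\ast,\delta)$ and $B(y_\ast,\delta)$.

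\emph{Where the difficulty sits.} None of the arithmetic is hard; the only real content is $(\star)$, i.e.\ pinning down that on the relevant region $F$ is exactly the rigid rotation $(\phi,\theta)\mapsto(\phi+2\theta,\theta)$, and this rests entirely on $Q$ being a convex intersection of two disks. A secondary nuisance is that the four centres $x_\ast,y_\ast,Ix_\ast,Iy_\ast$ themselves lie in the singularity set $\cS_1$ (their footpoints are corners), so one works on $M\setminus\cS_1$ throughout; but since $U(\delta)$ and $V(\delta)$ are defined using the open sets $M_r^{in},M_r^{out}$ and every inequality above is strict, no point that actually occurs has a corner footpoint, and each $Fy$ above has its footpoint in the open arc $\Gamma_r$.
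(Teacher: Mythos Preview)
Your proof is correct and follows essentially the same approach as the paper's: both compute the range of $\phi+2\theta$ on each piece of $U(\delta)$, check it lands strictly inside $(\phi_\ast,2\pi-\phi_\ast)$ so that $Fy\in M_r$ (giving $y\notin M_r^{out}$ and hence item (1)), and then check that $Fy$ is at least $\delta$ away from $x_\ast$ and $y_\ast$ in one coordinate (giving $F(U(\delta))\cap V(\delta)=\emptyset$). Your write-up is a bit more careful than the paper's in two respects: you explicitly isolate and justify the rotation formula $(\star)$ via convexity of $Q$, and you keep track of strict versus non-strict inequalities (the paper writes $\phi_\ast<3\phi_\ast-2\delta$ and $3\phi_\ast+3\delta<2\pi-\phi_\ast-\delta$, which fail at the endpoints $\delta=\phi_\ast$ or $\delta=\tfrac{\pi}{2}-\phi_\ast$, whereas your $\ge$ together with the strict bounds on $\phi+2\theta$ handles those endpoints cleanly).
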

\begin{proof}
Let $0< \delta\le  \min\{\phi_{\ast}, \frac{\pi}{2}-\phi_{\ast}\}$ be given. 
Then we have 
\begin{enumerate}
\item if  $x=(\phi,\theta) \in V(x_{\ast}, \delta)$, then
$2\pi -\phi_{\ast} -\delta <\phi < 2\pi -\phi_{\ast}$, $\phi_{\ast} -\delta < \theta< \phi_{\ast}+\delta$;

\item if  $x=(\phi,\theta) \in V(y_{\ast}, \delta)$, then
$\phi_{\ast}<\phi< \phi_{\ast} +\delta$, $\pi- \phi_{\ast}-\delta < \theta <\pi- \phi_{\ast}+\delta$; 

\item if  $x=(\phi,\theta) \in U(Ix_{\ast}, \delta)$, then
$2\pi -\phi_{\ast} -\delta <\phi < 2\pi -\phi_{\ast}$, $\pi- \phi_{\ast}-\delta < \theta <\pi- \phi_{\ast}+\delta$; 

\item if  $x=(\phi,\theta) \in U(Iy_{\ast}, \delta)$, then
$\phi_{\ast}<\phi< \phi_{\ast} +\delta$, $\phi_{\ast} -\delta < \theta< \phi_{\ast}+\delta$.
\end{enumerate}

Given a point $x=(\phi,\theta) \in U(Iy_{\ast}, \delta)$, we have $\phi_{\ast}<\phi< \phi_{\ast} +\delta$
and $\phi_{\ast}<3\phi_{\ast} - 2\delta<\phi + 2\theta < 3\phi_{\ast} +3\delta < 2\pi -\phi_{\ast} -\delta$,
since $\delta \le  \min\{\phi_{\ast}, \frac{\pi}{2} -\phi_{\ast}\}$.
It follows that $U(Iy_{\ast}, \delta) \subset \bigcup_{n\ge 1} M_{r,n}^{in}$, 
$U(Iy_{\ast}, \delta)\cap V(\delta) =\emptyset$ and $F(U(Iy_{\ast}, \delta)) \cap V(\delta)= \emptyset$.
The same conclusions hold for the part $U(Ix_{\ast}, \delta)$. This completes the proof.
\end{proof}

In the following we will set $\delta=\delta_\ast:= \min\{\phi_{\ast}, \frac{\pi}{2}-\phi_{\ast}\}$,
and consider the sets $ U(\delta_{\ast})$ and $ V(\delta_{\ast})$.

To prove Theorem \ref{mainthm}, we will consider orbit segments that start on $\Gamma_r$,
have some reflections on $\Gamma_R$ and then return to $\Gamma_r$,
Let $(x,\dots, x_0, x_1,\dots, F^{n_1}x_1, x_2)$ be such a segment.
See Section \ref{shortn} for these notations.
The following two cases will be treated separately: 
1). the case that there is only one reflection on $\Gamma_R$ (that is, when $n_1 =0$); 
and 2). the case that there are multiple reflections on  $\Gamma_R$ (that is, when $n_1 \ge 1$). 
Then  we 
divide each case into several subcases. There are two subcases
when $n_1=0$: 
\begin{enumerate}
\item[1a).] $d_1 = R\sin\theta_1\ge 2r$, 
which means the orbit segment is uniformly transverse to $\Gamma_R$ at $P_1=p(x_1)$;

\item[1b).]$d_1< 2r$, which means  the orbit segment is almost tangent to $\Gamma_R$ at $P_1=p(x_1)$. 
\end{enumerate}
Note that $d_1< r\sin\phi_{\ast}$ when $n_1\ge 1$.
 The following lemma describes the patterns of orbit segments that are almost tangent to $\Gamma_R$.

\begin{lem}\label{smalld1}
Let $\phi_{\ast}\in(0,\pi/2)$ be fixed, $\delta_{\ast}=\min\{\phi_{\ast}, \frac{\pi}{2}-\phi_{\ast} \}$,
and $U(\delta_{\ast})$ and $V(\delta_{\ast})$ be the open subsets of $M_r$
given in Proposition \ref{UV}.
Then for any $R \ge \max\{\frac{34r}{\phi_{\ast}},\frac{14.6r}{\delta_{\ast}\cdot \sin\phi_{\ast}} \}$, 
the following holds for the billiard map
on $Q(\phi_{\ast},R)$: for any $x \in M_r$, if $d_1< 2r$,
then $x_0\in V(\delta_{\ast})$, and  $x_2 \in U(\delta_{\ast})$.
\end{lem}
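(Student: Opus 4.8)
The plan is to prove the claim about $x_0$ by a direct computation in the plane, and then deduce the claim about $x_2$ from it via the time reversibility of $F$. For $x_0$: since $d_1=R\sin\theta_1<2r=R\sin\Psi_R$, the incoming trajectory at $p(x_1)$ makes an acute angle $<\Psi_R$ with the tangent line of $\Gamma_R$ at $p(x_1)$; and since $p(x_1)\in\Gamma_R$ has position angle in $(-\Phi_{\ast},\Phi_{\ast})$ about $O_R$, that tangent line makes an angle $<\Phi_{\ast}$ with the chord $AB$ (whose endpoints have position angles $\pm\Phi_{\ast}$). Hence the line $\ell$ through $p(x_0)$ and $p(x_1)$ makes an angle $\alpha:=\Psi_R+\Phi_{\ast}$ with $AB$; by \eqref{estiPhiA} and \eqref{estiPsiA}, applicable because $R\ge\frac{34r}{\phi_{\ast}}>20r$, one gets $\alpha<3.01\,\frac rR<\phi_{\ast}$, and moreover $p(x_1)$ lies within the sagitta $R(1-\cos\Phi_{\ast})<0.6\,\frac{r^2}{R}$ of $AB$. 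Thus the segment $\overline{p(x_0)p(x_1)}$ is short ($\le 2r$) and nearly parallel to $AB$, with an endpoint very close to $AB$.

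Next I would place coordinates with $O_R$ at the origin and $O_r=(b,0)$, so that $AB$ is the vertical segment from $B=(R\cos\Phi_{\ast},-r\sin\phi_{\ast})$ to $A=(R\cos\Phi_{\ast},r\sin\phi_{\ast})$, and the point of $\Gamma_r$ with coordinate $\phi$ has horizontal coordinate $g(\phi)=b+r\cos\phi=R\cos\Phi_{\ast}+r(\cos\phi-\cos\phi_{\ast})$, equal to $R\cos\Phi_{\ast}$ exactly at $\phi=\phi_{\ast}$ and $\phi=2\pi-\phi_{\ast}$. From the first paragraph the horizontal coordinate of $p(x_0)$ differs from $R\cos\Phi_{\ast}$ by at most $\varepsilon:=0.6\,\frac{r^2}{R}+2r\sin\alpha<7\,\frac{r^2}{R}$, i.e.\ $r\,|\cos\phi(x_0)-\cos\phi_{\ast}|<7\,\frac{r^2}{R}$. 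Since $\cos\phi\le\cos\phi_{\ast}$ on $[\phi_{\ast},2\pi-\phi_{\ast}]$ and $\cos\phi_{\ast}-\cos\phi\ge\frac2\pi\sin\phi_{\ast}\cdot|\phi-\phi_{\ast}|$ near $\phi=\phi_{\ast}$ (symmetrically near $2\pi-\phi_{\ast}$), this forces $\phi(x_0)$ to lie within $\frac{11r}{R\sin\phi_{\ast}}$ of $\phi_{\ast}$ or of $2\pi-\phi_{\ast}$; by $R\ge\frac{14.6r}{\delta_{\ast}\sin\phi_{\ast}}$ this is $<\delta_{\ast}$, which also justifies the monotonicity used. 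Assume WLOG $|\phi(x_0)-\phi_{\ast}|<\delta_{\ast}$ (the other case is symmetric, with $x_{\ast}$ in place of $y_{\ast}$). Then the $\phi$-coordinates of $x_0$ and $y_{\ast}$ agree to within $\delta_{\ast}$; and since the trajectory direction of $x_0$ is within $\alpha$ of the direction of $AB$ and the positive tangent of $\Gamma_r$ at $p(x_0)$ is within $|\phi(x_0)-\phi_{\ast}|$ of its value at $A$ --- where $\theta(y_{\ast})=\pi-\phi_{\ast}$ is exactly the angle from that tangent to the direction of $AB$ --- one gets $|\theta(x_0)-\theta(y_{\ast})|\le\alpha+|\phi(x_0)-\phi_{\ast}|<\frac{14.6r}{R\sin\phi_{\ast}}\le\delta_{\ast}$. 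As $x_0\in M_r^{out}$ by construction, this puts $x_0\in V(y_{\ast},\delta_{\ast})\subset V(\delta_{\ast})$.

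For $x_2$ I would use time reversibility. Put $z:=Ix_2$; then $z\in M_r^{out}$ (as $x_2\in M_r^{in}$), and $Fz=IF^{-1}x_2=I(F^{n_1}x_1)\in M_R^{in}$ (as $F^{n_1}x_1\in M_R^{out}$). All the reflections $x_1,Fx_1,\dots,F^{n_1}x_1$ lie on the single circular arc $\Gamma_R$, hence share the same angle coordinate, so $d\bigl(I(F^{n_1}x_1)\bigr)=R\sin\theta(F^{n_1}x_1)=R\sin\theta_1=d_1<2r$. Thus $z\in M_r$ meets the hypothesis of the lemma --- with $n_0(z)=0$, so that ``$x_0$'' for $z$ is $z$ itself --- and the argument above yields $z\in V(\delta_{\ast})$, whence $x_2=Iz\in I(V(\delta_{\ast}))=U(\delta_{\ast})$.

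The geometric heart is simple: an orbit grazing the nearly flat arc $\Gamma_R$ must travel almost along the chord $AB$, and such a line can meet the circle $\pa D_r$ only near the two points $A,B$ where $AB$ meets it. The real work --- and the main obstacle --- is the quantitative bookkeeping in the middle two paragraphs: controlling $\Psi_R$, $\Phi_{\ast}$, the sagitta of $\Gamma_R$, the chord length, and the transversality rate $\sin\phi_{\ast}$ of $\Gamma_r$ against $AB$ at the corners, with explicit enough constants to reach the threshold $R\ge\frac{14.6r}{\delta_{\ast}\sin\phi_{\ast}}$, and checking that $R\ge\frac{34r}{\phi_{\ast}}$ is what secures the auxiliary smallness ($R>20r$ for \eqref{estiPhiA}--\eqref{estiPsiA}, and $\alpha<\phi_{\ast}$).
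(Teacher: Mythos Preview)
Your overall strategy matches the paper's: a grazing reflection on $\Gamma_R$ forces the incoming segment $\overline{p(x_0)p(x_1)}$ to be nearly parallel to the chord $AB$, hence $p(x_0)$ sits near a corner; then time-reversibility transfers the conclusion from $x_0$ to $x_2$. The paper implements this via the explicit identities
\[
r\cos\theta_0 = R\cos\theta_1 - b\cos(\theta_1-\phi_1), \qquad \phi_0+\theta_0 = \phi_1-\theta_1,
\]
splits into the cases $n_1=0$ and $n_1\ge 1$, and reduces by the table symmetry to $\theta_1\in(0,\Psi_R)$; you instead argue uniformly with the angle-and-sagitta bound, which is pleasant and avoids the case split.

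There is, however, a quantitative slip in your last step. You bound the two coordinates separately,
\[
|\phi(x_0)-\phi_{\ast}|<\tfrac{11r}{R\sin\phi_{\ast}},\qquad |\theta(x_0)-\theta(y_{\ast})|\le\alpha+|\phi(x_0)-\phi_{\ast}|<\tfrac{14.01r}{R\sin\phi_{\ast}},
\]
and then assert $x_0\in V(y_{\ast},\delta_{\ast})$. But $V(y_{\ast},\delta_{\ast})=B(y_{\ast},\delta_{\ast})\cap M_r^{out}$ uses the Euclidean ball, so what you actually control is
\[
\|x_0-y_{\ast}\|<\sqrt{11^2+14.01^2}\,\tfrac{r}{R\sin\phi_{\ast}}\approx\tfrac{17.8\,r}{R\sin\phi_{\ast}},
\]
which can exceed $\delta_{\ast}$ under the lemma's hypothesis $R\ge\frac{14.6r}{\delta_{\ast}\sin\phi_{\ast}}$. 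The paper hits the $14.6$ threshold exactly because it bounds $|\phi_0+\phi_{\ast}|<\frac{12r}{R\sin\phi_{\ast}}$ and $|\theta_0-\phi_{\ast}|<\frac{8.3r}{R\sin\phi_{\ast}}$ and then takes the Euclidean combination. Your loss comes from the extra factor $\frac{2}{\pi}$ in the inequality $\cos\phi_{\ast}-\cos\phi\ge\frac{2}{\pi}\sin\phi_{\ast}\,|\phi-\phi_{\ast}|$; this is unnecessary once you know $\phi(x_0)\in(\phi_{\ast},\tfrac{\pi}{2})$ (or symmetrically), where the mean-value theorem gives the full factor $\sin\phi_{\ast}$. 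The paper establishes that localisation first (from $r\cos\phi_0>r\cos\phi_{\ast}-\frac{12r^2}{R}>0$, hence $|\phi_0|<\tfrac{\pi}{2}$), which also dissolves the circularity you flag with ``which also justifies the monotonicity used.'' If you insert that preliminary step and drop the $\frac{2}{\pi}$, your bounds become $|\phi(x_0)-\phi_{\ast}|<\frac{7r}{R\sin\phi_{\ast}}$ and $|\theta(x_0)-\theta(y_{\ast})|<\frac{10.1r}{R\sin\phi_{\ast}}$, giving $\|x_0-y_{\ast}\|<\frac{12.3r}{R\sin\phi_{\ast}}<\delta_{\ast}$, and your argument goes through.
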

See Section \ref{shortn} for the definitions of the points $x_i=(\phi_i, \theta_i)$, $i=0, 1, 2$ and the quantity $d_1$.
The proof of Lemma \ref{smalld1} will be given at \S \ref{pf.smalld1}.

\subsection{The first return map}\label{return}
We will consider the following shuffled version of $M_r^{in}$ using the partition
$M_r^{in}=\bigcup_{n\ge 0}M_{r,n}^{in}$:
\begin{align}
\hM= M_{r,0}^{in} \cup \Big(M_{r,1}^{in} \backslash U(\delta_{\ast}) \Big)\cup 
F\big(M_{r,1}^{in} \cap U(\delta_{\ast})\big) \cup \bigcup_{n\ge 2}FM_{r,n}^{in}.
\label{hM}
\end{align}
In the case when $U(\delta_{\ast})\cap  M_{r,1}^{in} =\emptyset$, 
the  definition \eqref{hM} reduces to 
\begin{align}
\hM= M_{r,0}^{in} \cup M_{r,1}^{in} \cup \bigcup_{n\ge 2}FM_{r,n}^{in}.
\end{align}

For each $x\in \hM$, let $\sigma(x)=\inf\{n\ge 1: F^n (x) \in \hM\}$ be the first return time 
of $x$ to $\hM$, and  $\hF: \hM \to \hM$, $x\mapsto F^{\sigma(x)} x$ 
be the first return map of $F$ on $\hM$.

Note that the orbit segment  $(x_0, x_1, x_2)$ when $n_1=0$ 
(or the segment $(x_0, x_1, \dots, F^{n_1} x_1, x_2)$ when $n_1\ge 1$) 
is a subsegment of $(F^k x)_{0\le k \le \sigma(x)}$.
We need a finer description of orbit segments $(F^k x)_{0\le k \le \sigma(x)}$
if $d_1<2r$.

\begin{pro}\label{extension}
Let $\phi_{\ast}\in(0,\pi/2)$ be fixed. Then for any 
$R \ge \max\{\frac{34r}{\phi_{\ast}},\frac{14.6r}{\delta_{\ast}\cdot \sin\phi_{\ast}} \}$, 
 the following holds for the billiard map $F$ on $Q(\phi_{\ast},R)$. 
Given a point $x\in \hM$, let  $x_i$, $i=0, 1, 2$ be given
as in Section \ref{shortn}. If $d_1< 2r$,  
then the segment $(F^{-1} x_0, x_0, \dots, x_2, F x_2)$ 
is a subsegment of the orbit segment $(F^k x)_{0\le k \le \sigma(x)}$.
\end{pro}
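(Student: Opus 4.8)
The plan is to reduce the statement to two inputs: Lemma~\ref{smalld1}, which under the hypothesis $d_1<2r$ puts $x_0\in V(\delta_{\ast})$ and $x_2\in U(\delta_{\ast})$; and the combinatorial claim that $\hM\subset M_r$ and $\hM\cap\bigl(U(\delta_{\ast})\cup V(\delta_{\ast})\bigr)=\emptyset$. Granting these, the rest is index bookkeeping. Since $x_0=F^{n_0}x$, $x_1=F^{n_0+1}x$, $F^{n_1}x_1=F^{n_0+n_1+1}x$, $x_2=F^{n_0+n_1+2}x$ and $Fx_2=F^{n_0+n_1+3}x$, the displayed block occupies exactly the indices $n_0-1,\dots,n_0+n_1+3$. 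First, $x\in\hM$ while $x_0\in V(\delta_{\ast})$ is disjoint from $\hM$, so $x\neq x_0$; as $n_0=0$ would force $x=x_0$, this gives $n_0\ge 1$, hence $F^{-1}x_0=F^{n_0-1}x$ has nonnegative index and already sits in $(F^kx)_{0\le k\le\sigma(x)}$. It then suffices to show $\sigma(x)\ge n_0+n_1+3$, i.e. $F^kx\notin\hM$ for $1\le k\le n_0+n_1+2$; the right endpoint $Fx_2$ is then in the orbit segment whether or not it lies in $\hM$.

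For that I would split the iterates $F^kx$, $1\le k\le n_0+n_1+2$, into three groups. The reflections $x_1,\dots,F^{n_1}x_1$ lie on $\Gamma_R$, hence in $M_R$, which is disjoint from $\hM\subset M_r$. The endpoints $x_0\in V(\delta_{\ast})$ and $x_2\in U(\delta_{\ast})$ miss $\hM$ by the claim. The remaining iterates $Fx,\dots,F^{n_0-1}x$ strictly between $x$ and $x_0$, together with their predecessors, all lie on $\Gamma_r$: by the definition of $n_0$ they are in $M_r\setminus M_r^{out}$, their predecessors are in $M_r$ so they are not in $M_r^{in}$, and (using injectivity of $F$ off the singularity set) they are not images under $F$ of points of $M_r^{in}$. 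A short case check against the four pieces of $\hM$ — $M_{r,0}^{in}\subset M_r^{out}$, $M_{r,1}^{in}\setminus U(\delta_{\ast})\subset M_r^{in}$, $F(M_{r,1}^{in}\cap U(\delta_{\ast}))\subset M_r^{out}$, and $FM_{r,n}^{in}\subset F(M_r^{in})$ for $n\ge 2$ — then shows these iterates avoid $\hM$. Combining the three groups gives $\sigma(x)\ge n_0+n_1+3$, completing the proof modulo the claim.

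To prove the claim, $\hM\subset M_r$ follows from \eqref{hM} once one notes $F(M_{r,1}^{in})=M_{r,1}^{out}\subset M_r^{out}$ and that for $n\ge 2$ every point of $FM_{r,n}^{in}$ lies on $\Gamma_r$ (it is both preceded and followed by reflections on $\Gamma_r$), so in fact $FM_{r,n}^{in}$ is disjoint from $M_r^{in}\cup M_r^{out}$. Then I would test each of the four pieces of $\hM$ against $U(\delta_{\ast})$ and $V(\delta_{\ast})$, using only: $M_{r,0}^{in}=M_{r,0}^{out}=M_r^{in}\cap M_r^{out}$; the inclusions $U(\delta_{\ast})\subset\bigcup_{n\ge1}M_{r,n}^{in}\subset M_r^{in}$ and $V(\delta_{\ast})\subset\bigcup_{n\ge1}M_{r,n}^{out}\subset M_r^{out}$ of Proposition~\ref{UV}(1), which force $U(\delta_{\ast})$ and $V(\delta_{\ast})$ to avoid $M_{r,0}^{in}$ and also $M_r^{out}$, $M_r^{in}$ respectively; the disjointness of $FM_{r,n}^{in}$ $(n\ge2)$ from $M_r^{in}\cup M_r^{out}$ just observed; and Proposition~\ref{UV}(2), which gives $U(\delta_{\ast})\cap V(\delta_{\ast})=\emptyset$ and $F(U(\delta_{\ast}))\cap V(\delta_{\ast})=\emptyset$. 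I expect the one genuinely delicate case to be the shuffled piece $F(M_{r,1}^{in}\cap U(\delta_{\ast}))$: passing to its image places it in $M_r^{out}$, so it can meet only $V(\delta_{\ast})$ among the two sets, and $F(U(\delta_{\ast}))\cap V(\delta_{\ast})=\emptyset$ — which is where the billiard geometry entered, via Proposition~\ref{UV} — is exactly the inequality that rules this out. Everything else is routine set bookkeeping once Lemma~\ref{smalld1} is in hand.
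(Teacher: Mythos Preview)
Your argument is correct and follows the same route as the paper: both invoke Lemma~\ref{smalld1} to place $x_0\in V(\delta_\ast)$ and $x_2\in U(\delta_\ast)$, then unwind the definition of $\hM$ together with Proposition~\ref{UV}. The paper organizes the bookkeeping top-down---case-splitting on whether $x_0\in M^{out}_{r,1}$ or $x_0\in M^{out}_{r,n}$ for some $n\ge2$, and in each case identifying $x$ explicitly as $F^{-1}x_0$ or $F^{1-n}x_0\in FM^{in}_{r,n}$, while showing $Fx_2\in\hM$ directly---whereas you go bottom-up, testing each intermediate iterate against each piece of $\hM$.

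One small wrinkle worth tightening: to exclude $Fx\in\bigcup_{n\ge2}FM^{in}_{r,n}$ you need $x\notin M^{in}_r$, but your clause ``their predecessors are in $M_r$ so they are not in $M_r^{in}$'' only establishes $F^jx\notin M_r^{in}$ for $j\ge1$, not for $j=0$. The fix is immediate---if $n_0\ge2$ then $x\in\hM$ forces $x\in\bigcup_{n\ge2}FM^{in}_{r,n}$ (the other three pieces of $\hM$ all have $n_0\le1$), and this set is disjoint from $M^{in}_r$ since its $F$-preimage lies in $M_r$---but it should be said.
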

\begin{proof}
Let $R \ge \max\{\frac{34r}{\phi_{\ast}},\frac{14.6r}{(\frac{\pi}{2}-\phi_{\ast})\cdot \sin\phi_{\ast}} \}$ be fixed. 
Let $\hM$ be given by \eqref{hM}.
For each $x\in \hM$, let  $x_i$, $i=0, 1, 2$ be given
as in Section \ref{shortn}. Suppose $d_1 <2r$. 
Then it follows from
Lemma \ref{smalld1}  that $x_0 \in V(\delta_{\ast}) \subset \bigcup_{n\ge 1} M^{out}_{r,n}$ 
and $x_2\in U(\delta_{\ast}) \subset \bigcup_{n\ge 1} M^{in}_{r,n}$.  
It follows from our definition of $\widehat{M}$
in \eqref{hM} that $x_2 \notin \hM$ and $Fx_2 \in \widehat{M}$.  
For $x_0$, we divide it into the following two cases:

\noindent{\bf Case 1.} $x_0 \in V(\delta_{\ast})\cap M^{out}_{r,1}$.
Then $F^{-1}x_0 \in M^{in}_{r,1}$.
Since $F(U(\delta_{\ast})) \cap V(\delta_{\ast}) =\emptyset$, we have 
\[ F^{-1} x_0\in M^{in}_{r,1} \cap F^{-1}V(\delta_{\ast})  \subset M^{in}_{r,1} \backslash U(\delta_{\ast}) \subset \hM.\]
Therefore, $x= F^{-1} x_0$, and the segment
$(x, x_0, \dots, x_2, F x_2)$ coincides with  $(F^k x)_{0\le k \le \sigma(x)}$.

\noindent{\bf Case 2.} $x_0 \in V(\delta_{\ast})\cap M^{out}_{r,n}$ for some $n\ge 2$.
Then $F^{-n}x_n \in M^{in}_{r,n}$, and hence $x = F^{1-n} x_0 \in FM^{in}_{r, n} \subset \hM$. It follows that 
$(F^{-1} x_0, x_0, \dots, x_2, F x_2)$ 
is a subsegment of $(F^k x)_{0\le k \le \sigma(x)}$.

This completes the proof of the proposition.
\end{proof}

\subsection{Orbit segments with invariant cones}
We identify the tangent space $T_xM$  with $\bR^2$ 
via the mapping  $u\pa_\phi + v\pa_\theta \mapsto (u,v)$, for every $x\in M$.
Then a cone in $\bR^2$ can be viewed as a cone at a point $x\in M$,
or a constant cone-field on a subset $E \subset M$, depending on the context. 
\begin{definition}
An orbit segment $(F^k x)_{m\le k \le n}$ is said to have {\it  positive derivative} if all four entries
of the tangent map $DF^{n-m}: T_{F^m x}M \to T_{F^n x}M$ are positive.
It is said to have {\it  negative derivative} if all four entries
of the tangent map $DF^{n-m}: T_{F^m x}M \to T_{F^n x}M$ are negative.
\end{definition}
Note that if an orbit segment $(F^k x)_{m\le k \le n}$ has either positive derivative or negative derivative,
then the cone $\cC=\{(u, v) \in \bR^2: uv \ge 0 \}$ is  strictly invariant from $F^m x$ to $F^n x$.
That is, $DF^{n-m}(\cC) \subset \cC^o$, 
where  $\cC^o=\{(u, v) \in \bR^2: uv > 0 \} \cup \{(0,0)\}$ is the interior of $\cC$.
For example, the condition \eqref{Wcondition} implies
that the cone $\cC$ is (strictly) invariant under each iteration along a  (strictly) convex scattering arc.

The following is  a  simple observation:
\begin{pro}\label{concatenation}
Suppose the cone $\cC$ is (strictly) invariant along the orbit segment $(F^k x)_{m\le k \le n}$.
If the points $p(F^k x)$, $m'\le k \le m$ lie on one circular arc, 
and  the points $p(F^k x)$, $n\le k \le n'$ lie on one circular arc, 
then $\cC$ is (strictly) invariant along the concatenation $(F^k x)_{m'\le k \le n'}$.
\end{pro}
\begin{proof}
It suffices to note that $DF=\begin{bmatrix} 1 & 2 \\ 0 & 1 \end{bmatrix}$
whenever the points $p(F^k x)$ and $p(F^{k+1} x)$ lie on one circular arc.
\end{proof}

The following is our main proposition,
whose proof will be given in Section \ref{n1=0} and Section \ref{n1ge1}. 
\begin{pro}\label{defocusing}
Let $\phi_{\ast}\in(0,\pi/2)$ be fixed. 
Suppose $R\ge \max \big\{\frac{14.6r}{\delta_{\ast}\cdot \sin\phi_{\ast}}, 
\frac{147 r}{\sin^2\phi_{\ast}}, 1773.7 r\big\}$. 
Then the following holds for the billiard map
on $Q(\phi_{\ast},R)$: for each $x\in \hM$, let $x_i=(\phi_i, \theta_i)$ and $d_i(x)=d(x_i)$, $i=0, 1, 2$ 
be given as in Section \ref{shortn}. Then
\begin{enumerate}
\item if $n_1 =0$ and $d_1 \ge 2r$, then  the segment 
$(x_0, x_1, x_2)$ has negative derivative;

\item if $n_1 =0$ and $d_1 < 2r$, then  the segment 
  $(F^{-1} x_0, x_0, \dots, x_2, F x_2)$ has negative derivative;

\item if $n_1\ge 1$, 
then  the orbit segment  $(F^{-1} x_0, x_0, \dots, x_2, F x_2)$
has positive derivative.
\end{enumerate}
\end{pro}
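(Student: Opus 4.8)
The plan is to analyze the product of tangent matrices along each of the three types of orbit segment and verify the sign pattern of the resulting $2\times 2$ matrix. The key structural tool is the product formula \eqref{tangent}: each reflection contributes a factor $\frac{1}{d(x_{k+1})}\left[\begin{smallmatrix} \tau_k - d(x_k) & \tau_k \\ \tau_k - d(x_k) - d(x_{k+1}) & \tau_k - d(x_{k+1})\end{smallmatrix}\right]$, and consecutive reflections on the \emph{same} arc contribute the unipotent factor $\left[\begin{smallmatrix}1&2\\0&1\end{smallmatrix}\right]$ (all entries nonnegative). By Proposition \ref{concatenation}, it therefore suffices to establish the defocusing property for the ``core'' segment between the last reflection on $\Gamma_r$ before hitting $\Gamma_R$ and the first reflection on $\Gamma_r$ after leaving $\Gamma_R$; the extra reflections on $\Gamma_r$ at the ends (and the reflections among themselves on $\Gamma_R$ in case (3)) only append nonnegative unipotent factors and do not change the sign of a matrix all of whose entries have a fixed strict sign. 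So the heart of the matter is to compute, in each case, the sign of the entries of the short product

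\begin{align*}
D_{x_1}F \cdot D_{x_0}F \qquad (n_1 = 0), \qquad\qquad
D_{x_1}F^{n_1+1}\cdot D_{x_0}F \qquad (n_1\ge 1),
\end{align*}

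possibly after pre- and post-multiplying by one factor of $D F$ for the boundary reflection on $\Gamma_r$ (cases (2) and (3)), using the geometric quantities $\tau_0,\tau_1,d_0,d_1,d_2$.

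For case (1) ($n_1=0$, $d_1\ge 2r$): here the orbit segment is uniformly transverse to $\Gamma_R$, so $d_1$ is comparable to $R$ while $\tau_0,\tau_1,d_0,d_2$ are all $O(r)$. Because $Q(r,b,R)$ is an intersection of disks, one expects the reversed Wojtkowski inequality $\tau_0 \le d_0 + d_1$ (and similarly $\tau_1 \le d_1 + d_2$), as noted in the third remark; combined with $d_1$ large this forces $\tau_0 - d_1 < 0$, $\tau_1 - d_1 < 0$, etc., and after multiplying out the two factors one should find all four entries of $D_{x_1}F\cdot D_{x_0}F$ negative. I would carry this out by writing the product explicitly and checking each of the four entries is a sum of terms that are manifestly negative under the size estimates $d_1 \ge 2r \ge \max\{\tau_0,\tau_1\}$ — actually I'd expect to need slightly sharper bounds on $\tau_0,\tau_1$ in terms of $r$ (e.g.\ $\tau_i \le 2r$ or so), which is where the hypothesis $d_1 \ge 2r$ is used quantitatively.

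For cases (2) and (3) ($d_1 < 2r$, i.e.\ the trajectory is nearly tangent to $\Gamma_R$): now Lemma \ref{smalld1} applies and tells us $x_0 \in V(\delta_\ast)$ and $x_2 \in U(\delta_\ast)$, meaning the orbit nearly traverses the chord $AB$; in particular $\phi_0$ is close to $-\phi_\ast$, $\theta_0$ close to $\phi_\ast$, and symmetric statements for $x_2$. The strategy is to treat $D F^{\text{(core)}}$ as a perturbation of the corresponding product for the \emph{degenerate} chord orbit through $x_\ast, y_\ast$, for which one can compute the limiting matrix exactly (the chord has length $|AB| = 2r\sin\phi_\ast$, and one can read off $\tau,d$ along it), and then show the sign pattern is stable under the $O(r/R)$ perturbations furnished by Lemma \ref{smalld1}. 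The single extra reflection on $\Gamma_r$ at each end (the $F^{-1}x_0$ and $Fx_2$ in the statement) is appended via Proposition \ref{concatenation}; its role is presumably to ``fix'' one of the boundary entries that would otherwise vanish in the limit. The difference between case (2) ($n_1=0$, negatively defocusing) and case (3) ($n_1\ge 1$, positively defocusing) comes from the parity of the number of reflections on $\Gamma_R$: each additional reflection on the concave-from-inside arc $\Gamma_R$ flips the overall sign, and one needs the quantitative bound $R \ge 147 r / \sin^2\phi_\ast$ (and $R \ge 1773.7 r$) to guarantee that the accumulated perturbations stay small enough that no entry changes sign along the chain of $n_1+1$ factors.

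The main obstacle I anticipate is case (3): controlling a product of an a priori \emph{unbounded} number $n_1+1$ of tangent matrices along the near-tangent segment on $\Gamma_R$ and showing the sign pattern persists. The naive bound on the perturbation grows with $n_1$, so one needs a good a priori bound on $n_1$ in terms of $R$ (something like $n_1 = O(\sqrt{R/r})$, since a chord nearly tangent to a circle of radius $R$ with subtended angle $\sim 2\Phi_\ast \sim 2r\sin\phi_\ast/R$ can be iterated on the order of $\pi/(2\Phi_\ast) \sim \pi R/(2r\sin\phi_\ast)$ times before escaping), and then to show that the product of the $n_1$ near-parabolic factors on $\Gamma_R$ is itself a matrix with a controlled (nonnegative) sign pattern — essentially because on a single arc the dynamics is integrable and the relevant cone is invariant. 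I'd expect this is exactly where the explicit constant $1773.7\,r$ and the factor $\sin^{-2}\phi_\ast$ enter, and it's the step requiring the most careful bookkeeping; the remaining cases are comparatively routine matrix multiplications under the size estimates already set up in Lemma \ref{smalld1} and \eqref{estiPhiA}--\eqref{estiPsiA}.
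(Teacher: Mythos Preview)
Your outline for case~(1) is correct and coincides with the paper's argument: since $Q(\phi_\ast,R)\subset D(O_r,r)$ one has $\tau_0,\tau_1<2r\le d_1$, and a short case check on the four entries of $D_{x_0}F^2$ (organized in the paper as the inequality $\tfrac{2}{d_1}<\tfrac{1}{\tau_0}+\tfrac{1}{\tau_1}$) gives negativity.

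Cases~(2) and~(3), however, contain genuine gaps. First, you cannot invoke Proposition~\ref{concatenation} to append the extra reflections $F^{-1}x_0$ and $Fx_2$: that proposition requires the \emph{inner} segment $(x_0,\ldots,x_2)$ to already be defocusing, and in these cases it generally is not (the paper says so explicitly for case~(2)). The extra unipotent conjugation genuinely changes the signs of some entries, so one must compute the full product $G=\bigl[\begin{smallmatrix}1&2\\0&1\end{smallmatrix}\bigr]D\bigl[\begin{smallmatrix}1&2\\0&1\end{smallmatrix}\bigr]$ and estimate each $G_{ij}$ directly. The $n_1$ intermediate reflections on $\Gamma_R$ in case~(3) likewise sit in the \emph{middle} of the product (contributing the single factor $\bigl[\begin{smallmatrix}1&2n_1\\0&1\end{smallmatrix}\bigr]$), not at the ends, so concatenation is not what handles them either. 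Second, your parity explanation for the sign difference between (2) and (3) is wrong: the proposition asserts positive defocusing for \emph{every} $n_1\ge 1$, not alternating signs. The actual mechanism is algebraic: after inserting $\bigl[\begin{smallmatrix}1&2n_1\\0&1\end{smallmatrix}\bigr]$, each entry of $G$ reorganizes into a sum of products of two factors, both of which are negative provided $\tau_0<\tfrac{2}{3}d_0+\tfrac{n_1}{n_1+1}d_1$ and $\tau_1<\tfrac{n_1}{n_1+1}d_1+\tfrac{2}{3}d_2$.

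Finally, you have located the difficulty in the wrong place. For $n_1\ge 2$ the coefficient $\tfrac{n_1}{n_1+1}\ge\tfrac{2}{3}$ makes the sufficient condition above easy to verify from the chord-length balance $\tau_0+2n_1d_1+\tau_1\approx 2r\sin\phi_\ast$; there is no accumulation of errors over $n_1$ steps. The delicate case---and the source of the constant $1773.7$---is $n_1=1$, where $\tfrac{n_1}{n_1+1}=\tfrac{1}{2}$ and the sufficient condition can fail. The paper then handles the exceptional subcases $\tau_0\ge\tfrac{2}{3}d_0+\tfrac{1}{2}d_1$ and (via time-reversal) $\tau_1\ge\tfrac{1}{2}d_1+\tfrac{2}{3}d_2$ by a detailed algebraic estimate of each $G_{ij}$ separately, optimizing over an auxiliary cutoff parameter~$\alpha$.
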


This allows us to complete the proof of    Theorem \ref{mainthm}.
 
\begin{proof}[Proof of Theorem \ref{mainthm}]
Let $\phi_{\ast}\in(0,\pi/2)$, and 
$R\ge \max \big\{\frac{14.6r}{\delta_{\ast}\cdot \sin\phi_{\ast}}, \frac{147 r}{\sin^2\phi_{\ast}}, 1773.7 r\big\}$,
and $F$ be the billiard map on $Q(\phi_{\ast},R)$, $\hM\subset M_r$ be given in Section \ref{return}. 
Note that $R\ge  \frac{165 r}{\sin^2\phi_{\ast}} \ge \frac{33r}{\phi_{\ast}}$.
Let $\cC(x)=\{(u,v)\in T_xM: uv \ge 0\}$, $x\in\hM$, be the constant cone-field on $\hM$.
Combining Proposition \ref{extension}, \ref{concatenation}  and Proposition \ref{defocusing}, 
we see that the cone-field $\cC$ is strictly invariant along 
the orbit segment $(F^k x)_{0\le k\le \sigma(x)}$ for $\mu$-a.e. $x\in \hM$.
It follows from \cite{Woj85} that
the system $(\hM, \hF, \mu_{\hM})$ is hyperbolic. 
Since $\hM \cup F^{-1} \hM \supset M_r^{in}$, we see that $\bigcup_{n\in\bZ} F^n \hM =M$.
Therefore, $(M, F, \mu)$ is  hyperbolic.
This finishes the proof of Theorem \ref{mainthm}.
\end{proof}

\subsection{Proof of Lemma \ref{smalld1}}\label{pf.smalld1}
We will give some preliminary estimates.
Recall that $R\cos\Phi_{\ast} = b+ r\cos\phi_{\ast}$ and $R\sin\Phi_{\ast}=r\sin\phi_{\ast}$. 
Then 
\begin{align}
\Phi_{\ast}=\arcsin(\frac{r\sin\phi_{\ast}}{R})<1.002\cdot \frac{r\sin\phi_{\ast}}{R}, \label{estiPhiA}
\end{align}
for $R\ge 10r$. Here we have used that $10*\arcsin 0.1 = 1.00167.... < 1.002$.

For later convenience, we introduce another quantity $\Psi_R=\sin^{-1}\frac{2r}{R}$.
It is easy to see that
\begin{align}
\Psi_R=\arcsin(\frac{2r}{R})< 1.002 \cdot \frac{2r}{R}, \label{estiPsiA}
\end{align}
for $R\ge 20r$. 
It is clear that $\Psi_R > 2\Phi_{\ast}$, since $\sin\Psi_R =\frac{2r}{R}>2 \sin\Phi_{\ast}$.

\begin{proof}[Proof of Lemma  \ref{smalld1}]
Let  $U(\delta_{\ast})$ and $V(\delta_{\ast})$
be the open subsets of $M_r$   given in  Proposition \ref{UV},
$R \ge \max\{\frac{34r}{\phi_{\ast}},\frac{14.6r}{\delta_{\ast}\cdot \sin\phi_{\ast}} \}$ be fixed.
Given a point $x\in M_r$, and  $n_i$, $i=0, 1$, $x_i$, $i=0, 1, 2$ and $d_1$ be defined as 
in \S \ref{shortn}.
Suppose $d_1=R\sin\theta_1< 2r$, which is equivalent to $\theta_1 \in (0, \Psi_R) \cup (\pi -  \Psi_R , \pi)$.
We can do some reduction using the symmetries of asymmetric lemon billiards:

R1). The two cases $\theta_1 \in(0, \Psi_R)$ and  $\theta_1 \in(\pi - \Psi_R, \pi)$
are related to the symmetry of the billiard table with respect to the line through $O_r$ and $O_R$.
It suffices to consider the case with $\theta_1 \in(0, \Psi_R)$.

R2). Due to the time-reversal symmetry of the billiard map 
and the symmetry in the definition $V(\delta_{\ast}) = I(U(\delta_{\ast}))$,
it suffices to prove $x_0 \in V(\delta_{\ast})$.

We divide our analysis into two cases according to the number of reflections 
of the orbit segment on $\Gamma_R$:

Case 1. There is only one reflection on $\Gamma_R$.
Applying the above reductions we can assume
$\theta_1 \in(0, \Psi_R)$ and we only need to prove that $x_0 \in V(\delta_{\ast})$.
Let $\bar P$ be the point of intersection of the circle $\pa D(O_R, R)$
with  the line passing through $P_0=p(x_0)\in \Gamma_r$ and $P_1=p(x_1)\in \Gamma_R$.
Then the position angle $\bar\phi$ of $\bar P$ with respect to $O_R$
satisfies $\bar\phi <-\Phi_{\ast}$, since $\bar P$ lies outside of the arc $\Gamma_R$.
Let $Q$ be the perpendicular foot from $O_R$ to the line passing through  $P_0P_1$.
See Fig.~\ref{fig.psiA}.
Then the coordinates of the points $x_0=(\phi_0 ,\theta_0)\in M_r^{out}$ and 
$x_1 =Fx_0=(\phi_1 ,\theta_1)\in M_R$ are related in the following way:
\begin{align}
r\cos\theta_0 & = R\cos\theta_1 - b\cos(\theta_1 -\phi_1); \label{push1}\\
\phi_0 +\theta_0 &= -\angle_{O_r} (L,N) = -\angle_{O_R} (L,Q) = \phi_1 - \theta_1. \label{push2}
\end{align}

\begin{figure}[h]
\tikzmath{
\a=75;
\b=22.73; 
}
\centering
\begin{tikzpicture}
\coordinate (OR) at (0,0);
\coordinate (Or) at (4.1,0);
\node[right] at (6,0) {$L$};
\draw[gray,dashed] (OR) -- (6,0);
\filldraw (0,0) circle(.03) node[above] {$O_R$};
\filldraw (4.1,0) circle(.03) node[above] {$O_r$};
\coordinate (A) at ({5*cos(22.8)},{5*sin(22.8)});
\filldraw (A) circle(.03);
\draw (A) arc (\a:(360 - \a):2);
\draw (A) arc (\b: -40 :5);
\coordinate (P) at ({4.1+2*cos(-82)},{2*sin(-82)});
\filldraw (P) circle(.03)  node[left] {$P_0$};
\coordinate (bP) at ({5*cos(-37)},{5*sin(-37)});
\filldraw (bP) circle(.03) node[right] {$\bar P$};
\coordinate (P1) at ({5*cos(-5)},{5*sin(-5)});
\filldraw (P1) circle(.03)  node[right] {$P_1$};
\coordinate (L1) at ({5*cos(-5)+4.4*(cos(27)-cos(-5))},{5*sin(-5)+4.4*(sin(27)-sin(-5))});
\draw[->] (P) -- (P1)  -- (L1) node[pos=1.1] {$P_2$};
\draw[dashed]  (bP) -- (P);
\draw[blue!70] (0,0) -- ({4.8*cos(-21)},{4.8*sin(-21)});
\draw[blue!70] (Or) -- ({4.1+2*cos(-21)},{2*sin(-21)}) node[right] {$N$};
\node[right] at ({4.8*cos(-21)},{4.8*sin(-21)}){$Q$};
\draw[gray,dashed] (OR) -- (P1) (Or) -- (P);
\end{tikzpicture}
\caption{An orbit with one reflection on $\Gamma_R$. Here $P_0=p(x_0)$, $P_1=p(x_1)$, and $P_2 =p(x_2)$.
Both blue lines $O_RQ$ and $O_rN$ are perpendicular to the same line $P_0P_1$.}
\label{fig.psiA}
\end{figure}

Since $|\phi_1| < \Phi_{\ast}$ and $\theta_1 < \Psi_R$, 
we  have $\bar\phi =\phi_1 - 2\theta_1 \in( -\Phi_{\ast} - 2\Psi_R, -\Phi_{\ast})$. 
Comparing the positions of $P_0$ and $\bar P$, we get 
\begin{align}
b+r\cos\phi_0 & > R \cos \bar\phi > R\cos ( \Phi_{\ast} + 2\Psi_R)
= R\cos \Phi_{\ast} + R (\cos ( \Phi_{\ast} + 2\Psi_R) - \cos\Phi_{\ast}) \nonumber \\
& =  b+r\cos\phi_{\ast} - 2R\sin(\Phi_{\ast}+ \Psi_R)\cdot\sin\Psi_R
> b+r\cos\phi_{\ast}  - \frac{12 r^2}{R}, 
\end{align}
since $\sin\Psi_R=\frac{2r}{R}$, and $\sin(\Phi_{\ast}+ \Psi_R)< \sin\Phi_{\ast}+ \sin\Psi_R<\frac{3r}{R}$.
It follows from our assumption on $R$ that  $R>  \frac{14.6 r }{\cos\phi_{\ast}}$.
Then we have
\begin{enumerate}
\item $r\cos\phi_0> r\cos\phi_{\ast} - \frac{12 r^2}{R}> 0$.
Therefore, $- \frac{\pi}{2}< \phi_0 < -\phi_{\ast}$;

\item  $\ds 0 > \cos\phi_0 - \cos\phi_{\ast} > - \frac{12 r}{R}$.
\end{enumerate}
It follows from (1) that there exists $\phi_\ast \in (- \frac{\pi}{2}, -\phi_{\ast})$
such that 
\begin{align}
|\cos\phi_0 - \cos \phi_{\ast} | 
& = | \sin\phi_\ast| \cdot |\phi_0 - (- \phi_{\ast})| \ge \sin\phi_{\ast}  \cdot |\phi_0 + \phi_{\ast}|. \label{cossin}
\end{align}
Combining \eqref{cossin} with (2), we get
\begin{align} 
|\phi_0 + \phi_{\ast}| & \le \frac{1}{ \sin\phi_{\ast}} |\cos\phi_0 - \cos \phi_{\ast} |  < \frac{12 r}{R \sin\phi_{\ast}}.
\label{estiphi2}
\end{align}

Since $|\phi_1|< \Phi_{\ast}$ and $\theta_1\in(0,\Psi_R)$, it follows from \eqref{rbR} and \eqref{push1} that
\begin{align}
|r\cos\theta_0 - r\cos\phi_{\ast}|
&=\big|\big(R\cos\theta_1 - b\cos(\theta_1 -\phi_1)\big) 
- \big(R\cos\Phi_{\ast} - b\big)   \big|  \nonumber  \\
&\le R\cdot |\cos\theta_1 - \cos\Phi_{\ast} |  + b\cdot |\cos(\theta_1 -\phi_1)- 1|  \nonumber \\
&\le 2R\cdot \Big(|\sin\frac{\Phi_{\ast}-\theta_1}{2} \sin\frac{\Phi_{\ast}+\theta_1}{2} |  
+  \sin^2\frac{\theta_1 -\phi_1}{2}\Big)  \nonumber \\ 
&< 2R\cdot \Big(\frac{\Psi_R}{2} \frac{\Phi_{\ast}+\Psi_R}{2} 
+  \frac{(\Phi_{\ast}+\Psi_R)^2}{4}\Big)< \frac{7.54 r^2}{R}. \label{esti.theta0}
\end{align}

Since $\phi_0\in(-\frac{\pi}{2}, -\phi_{\ast})$, 
and $\phi_1-\theta_1 > -\Phi_{\ast} - \Psi_R> -3.006\cdot \frac{r}{R} \ge -\frac{1}{11}\phi_{\ast}$
for $R \ge \frac{34 r}{\phi_{\ast}}$, we have
\begin{align}
\theta_0=\phi_1-\theta_1 - \phi_0\in(\phi_{\ast}+ \phi_1-\theta_1,\frac{\pi}{2}+\phi_1-\theta_1)
\subset \Big(\frac{10}{11}\phi_{\ast}, \frac{\pi}{2}\Big).
\end{align}
Then there exists $\bar\theta\in (\frac{10}{11}\phi_{\ast}, \frac{\pi}{2})$ such that
\begin{align}
|\cos\theta_0 - \cos\phi_{\ast}|
& = |\sin(\bar \theta)\cdot (\theta_0 -\phi_{\ast})|
\ge \sin(\frac{10}{11}\phi_{\ast})\cdot |\theta_0 -\phi_{\ast}|
>\frac{10}{11}\sin \phi_{\ast} \cdot |\theta_0 -\phi_{\ast}|.
\end{align}
Combining it with  \eqref{esti.theta0}, we see that 
\begin{align}
|\theta_0 -\phi_{\ast}| < \frac{11}{10\sin\phi_{\ast}}\cdot |\cos\theta_0 - \cos\phi_{\ast}|
<  \frac{11}{10\sin\phi_{\ast}}\cdot \frac{7.54r}{R}
< \frac{8.3r}{R\sin\phi_{\ast}}.
\label{estitheta2}
\end{align}

Putting them together, we have 
\begin{align} 
\|x_0 - x_\ast\|=\big(|\phi_0 + \phi_{\ast}|^2 + |\theta_0 - \phi_{\ast}|^2\big)^{1/2}
< \frac{14.6r}{R\sin\phi_{\ast}}.
\end{align}
It follows that $\|x_0 - x_\ast\|< \delta_{\ast}$ for $R \ge \max\{\frac{34r}{\phi_{\ast}},\frac{14.6r}{\delta_{\ast}\cdot \sin\phi_{\ast}} \}$.
Since $x_0 \in M_r^{out}$, it follows that
$x_0 \in V(\delta_{\ast})$.

\begin{figure}[h]\tikzmath{
\a=75;
\b=22.73; 
}
\centering
\begin{tikzpicture}
\coordinate (OR) at (0,0);
\coordinate (Or) at (4.1,0);
\filldraw (0,0) circle(.03) node[above] {$O_R$};
\filldraw (4.1,0) circle(.03) node[above] {$O_r$};
\coordinate (A) at ({5*cos(22.8)},{5*sin(22.8)});
\draw (A) arc (\a:(360 - \a):2);
\draw (A) arc (\b: -\b :5);
\coordinate (P) at ({4.1+2*cos(-79)},{2*sin(-79)});
\filldraw (P) circle(.03);
\coordinate (P1) at ({5*cos(-12)},{5*sin(-12)});
\filldraw (P1) circle(.03) node[right] {$P_1$};
\coordinate (P2) at ({5*cos(12)},{5*sin(12)});
\filldraw (P2) circle(.03);
\coordinate (P3) at  ({4.1+2*cos(79)},{2*sin(79)});
\filldraw (P3) circle(.03);
\draw[->] (P) -- (P1)  node[pos=-.25] {$P_0$};
\draw[->] (P1) -- (P2);
\draw[->] (P2) -- (P3)  node[pos=1.25] {$P_2$};
\end{tikzpicture}
\caption{An orbit with two reflections on $\Gamma_R$. 
Here $P_0=p(x_0)$, $P_1=p(x_1)$, and $P_2=p(x_2)$.}\label{fig.n1}
\end{figure}
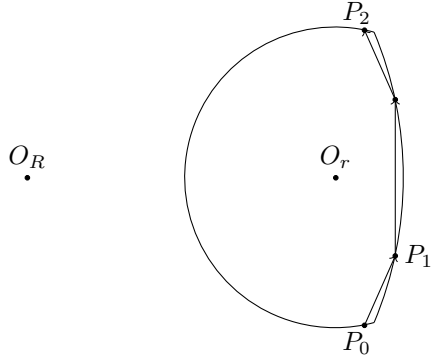

Case 2. There are two or more reflections on $\Gamma_R$. 
It follows that $\theta_1 \in (0, \Phi_{\ast}) \cup (\pi -  \Phi_{\ast} , \pi)$. 
Applying the reductions again, we assume $\theta_1 \in (0, \Phi_{\ast})$
and we only need to prove  $x_0 \in V(\delta_{\ast})$.
Let $\bar P$ be the point of intersection of the circle $\pa D(O_R, R)$
with  the line passing through $P_0=p(x_0)\in \Gamma_r$ and $P_1=p(x_1)\in \Gamma_R$.
The position angle $\bar\phi$ of $\bar P$ with respect to $O_R$
satisfies $-3\Phi_{\ast} < \bar\phi =\phi_1 - 2\theta_1 <  -\Phi_{\ast}$. 
Combining with \eqref{estiPhiA}, we get
\begin{align}
b+r\cos\phi_0 & > R \cos \bar\phi
> R\cos \Phi_{\ast} + R (\cos  3\Phi_{\ast}  - \cos \Phi_{\ast}) \nonumber \\
& =  b+r\cos\phi_{\ast} - 2R\sin2\Phi_{\ast} \sin\Phi_{\ast}
>  b+r\cos\phi_{\ast} - \frac{4 r^2\sin^2\phi_{\ast}}{R}.
\end{align}
It follows that $-\frac{\pi}{2}< \phi_0 < -\phi_{\ast}$, and hence
\begin{align} 
|\phi_0 - (-\phi_{\ast})| & \le \frac{1}{ \sin\phi_{\ast}} |\cos\phi_0 - \cos \phi_{\ast} |  < \frac{4 r \sin\phi_{\ast}}{R}.
\label{estiphi2n1}
\end{align}

Since $|\phi_1|< \Phi_{\ast}$ and $\theta_1\in(0, \Phi_{\ast})$, it follows from \eqref{rbR} and \eqref{push1} that
\begin{align}
|r\cos\theta_0 - r\cos\phi_{\ast}|
&=\big|\big(R\cos\theta_1 - b\cos(\theta_1 -\phi_1)\big) 
- \big(R\cos\Phi_{\ast} - b\big)   \big|  \nonumber  \\
&\le R\cdot |\cos\theta_1 - \cos\Phi_{\ast} |  + b\cdot |\cos(\theta_1 -\phi_1)- 1|  \nonumber  \\
&< 2R\cdot \frac{2r^2\sin^2\phi_{\ast}}{R^2}= \frac{4r^2\sin^2\phi_{\ast}}{R}.
\end{align}

Since $\phi_0\in(-\frac{\pi}{2}, -\phi_{\ast})$, and $\phi_1-\theta_1 > -2\Phi_{\ast}> -\frac{1}{17}\phi_{\ast}$
for $R \ge 34r$, we have
\begin{align}
\theta_0=\phi_1-\theta_1 - \phi_0\in(\phi_{\ast}+ \phi_1-\theta_1,\frac{\pi}{2}+\phi_1-\theta_1)
\subset \Big(\frac{16}{17}\phi_{\ast}, \frac{\pi}{2}\Big).
\end{align}
So there exists $\bar\theta\in (\frac{16}{17}\phi_{\ast},  \frac{\pi}{2})$ such that
\begin{align}
|\cos\theta_0 - \cos\phi_{\ast}|
& = |\sin(\bar \theta)\cdot (\theta_0 -\phi_{\ast})|
\ge \sin(\frac{16}{17}\phi_{\ast})\cdot |\theta_0 -\phi_{\ast}|
>\frac{16}{17}\sin \phi_{\ast} \cdot |\theta_0 -\phi_{\ast}|.
\end{align}
Combining them, we see that 
\begin{align}
|\theta_0 -\phi_{\ast}| < \frac{17}{16\sin\phi_{\ast}}\cdot |\cos\theta_0 - \cos\phi_{\ast}|
<  \frac{17}{16\sin\phi_{\ast}}\cdot  \frac{4r\sin^2\phi_{\ast}}{R}
= \frac{17r\sin\phi_{\ast}}{4R}.
\label{estitheta2n1}
\end{align}

Putting them together, we have 
\begin{align} 
\|x_0 -  x_\ast\|=\big(|\phi_0 + \phi_{\ast}|^2 + |\theta_0 - \phi_{\ast}|^2\big)^{1/2}
< \frac{5.84r\sin\phi_{\ast}}{R}.
\end{align}
It follows that $\|x_0 - x_\ast\|< \delta_{\ast}$ for $R \ge\max\{34r, \frac{5.84r \sin\phi_{\ast}}{\delta_{\ast}}\}$.
This completes the proof for the second case.

Collecting terms, we complete the proof of Lemma  \ref{smalld1}.
\end{proof}

\section{Orbit segments with a single reflection on $\Gamma_R$}
\label{n1=0}

In this section we will consider the case that the orbit segment  
$(F^{k}x)_{0\le k \le \sigma(x)}$ has exactly one reflection on $\Gamma_R$.
Let $x\in \hM$,  $x_i=(\phi_i, \theta_i)$ and $d_i(x)=d(x_i)$ for $i=0, 1, 2$,
$n_i$ for $i=0, 1$ be given in Section \ref{shortn}.
Note that $n_1=0$ in this section.
So the triple $(x_0, x_1, x_2)$ is part of the orbit segment $(F^{k}x)_{0\le k \le \sigma(x)}$.

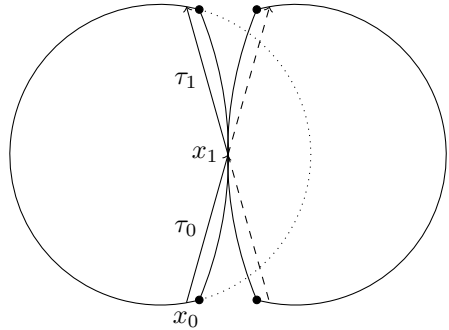
\begin{figure}[ht]
\tikzmath{
\a=75;
\b=22.73;
}
\begin{tikzpicture}
\coordinate (OR) at (0,0);
\coordinate (Or) at (4.1,0);
\coordinate (A) at ({4.1+2*cos(\a)},{2*sin(\a)});
\draw (A) arc (\b: -\b :5);
\draw (A) arc (\a:(360 - \a):2);
\filldraw (A) circle(.05);
\coordinate (B) at ({4.1+2*cos(-\a)},{2*sin(-\a)});
\filldraw (B) circle(.05);
\draw[dotted] (B) arc (-\a : \a :2);
\coordinate (C) at ({5.9+2*cos(180 - \a)},{2*sin(180 - \a)});
\draw (C) arc ((180 - \a) : (\a - 180) : 2);
\draw (C) arc ((180-\b):(180 + \b):5);
\filldraw (C) circle(.05);
\coordinate (D) at ({5.9+2*cos(\a - 180)},{2*sin(\a - 180)});
\filldraw (D) circle(.05);
\draw[->]  ({4.1+2*cos(-\a -5)},{2*sin(-\a -5)}) -- (5,0) node[pos=0,below]{$x_0$}  node[pos=0.5,left]{$\tau_0$}; 
\draw[->] (5,0)  -- ({4.1+2*cos(\a +5)},{2*sin(\a +5)})  node[pos=0,left]{$x_1$}  node[pos=0.5,left]{$\tau_1$};
\draw[dashed, ->]  (5,0) -- ({5.9+2*cos(180 - \a -5)},{2*sin(180 - \a -5)});
\draw[dashed]  (5,0) -- ({5.9+2*cos(-180 + \a +5)},{2*sin(-180 + \a +5)});
\end{tikzpicture}
\caption{The mirror table along the tangent line of $\Gamma_R$ at $p(x_1)$. }\label{maa}
\end{figure}

Let $\tau_i$ be the distance from $p(x_i)$ to $p(x_{i+1})$, $i=0, 1$.
By the major-arc assumption (see Fig.~ \ref{maa}), the union of the table with 
its mirror along the tangent line $T_{p(x_{1})}\Gamma_R$ covers the 
extended trajectory in $D(O_r, r)$. See the two dashed segments in Fig.~\ref{maa}.
It follows that
\begin{align}
\tau_0 +\tau_1 &> 2d_i, \quad i=0, 2.\label{majorarc0}
\end{align}
Combining the above inequalities for $i=0$ and $ i=2$, we get that
\begin{align}
\tau_0+\tau_1 &> d_0+d_2.\label{majorarc}
\end{align}

By \eqref{tangent}, the tangent map $D_{x_0}F^2: T_{x_0}M \to T_{x_2} M$ with respect to the $(\phi,\theta)$-coordinates
is given by
\begin{align}
D_{x_0}F^2
=&\frac{1}{d_2}\begin{bmatrix} \tau_1 -d_1 & \tau_1 \\ \tau_1-d_1-d_2 & \tau_1- d_2 \end{bmatrix}
\cdot\frac{1}{d_1}\begin{bmatrix} \tau_0 -d_0 & \tau_0 \\ \tau_0-d_0-d_1 & \tau_0- d_1 \end{bmatrix}.
\end{align}

Modulo the scalar $\frac{1}{d_2d_1}$, we get
\begin{align}
D= &\begin{bmatrix}D_{11} & D_{12} \\ D_{21} & D_{22} \end{bmatrix}:= (d_1d_2)\cdot D_{x_0}F^2 \nonumber \\
=&\begin{bmatrix} 
(\tau_1 -d_1)(\tau_0 -d_0) +\tau_1( \tau_0-d_0-d_1)
& (\tau_1 -d_1)\tau_0 +\tau_1( \tau_0-d_1) \\ 
(\tau_1 -d_1 -d_2)(\tau_0 -d_0) +(\tau_1-d_2)( \tau_0-d_0-d_1)
& (\tau_1 -d_1 -d_2)\tau_0 +(\tau_1-d_2)( \tau_0 -d_1)
\end{bmatrix}\label{DF2}\\
=&
\begin{bmatrix} 
2\tau_1(\tau_0 -d_0) -d_1(\tau_0 -d_0+\tau_1)
& 2\tau_1\tau_0  -d_1(\tau_0+\tau_1) \\ 
2(\tau_1-d_2)( \tau_0-d_0) - d_1(\tau_0-d_0+\tau_1-d_2)
& 2(\tau_1 -d_2)\tau_0 -d_1(\tau_0+\tau_1-d_2)
\end{bmatrix} \label{DF2d1}.
\end{align}

Since $\tau_0+\tau_1 > d_0+d_2$,
the coefficients of $d_1$ of all four entries of $D$ from \eqref{DF2d1} are negative.

\begin{lem}\label{d1large}
If $\frac{2}{d_1} < \frac{1}{\tau_0} + \frac{1}{\tau_1}$,
then the orbit segment $(x_0, x_1, x_2)$ has negative derivative.
\end{lem}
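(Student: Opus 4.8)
The plan is to show directly that, under the hypothesis, all four entries $D_{11},D_{12},D_{21},D_{22}$ of the matrix $D$ in \eqref{DF2d1} are negative. Since $D=(d_1d_2)\,D_{x_0}F^2$ with $d_1,d_2>0$, this is equivalent to all four entries of $D_{x_0}F^2$ being negative, i.e.\ to $(x_0,x_1,x_2)$ being negatively defocusing. First I would record the algebraic form of the hypothesis: multiplying $\frac{2}{d_1}<\frac{1}{\tau_0}+\frac{1}{\tau_1}$ by $d_1\tau_0\tau_1>0$ gives $2\tau_0\tau_1<d_1(\tau_0+\tau_1)$, which is precisely $D_{12}=2\tau_0\tau_1-d_1(\tau_0+\tau_1)<0$. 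I would also note that by the major-arc inequalities \eqref{majorarc0} and \eqref{majorarc} the three quantities $\tau_0+\tau_1-d_0$, $\tau_0+\tau_1-d_2$, and $\tau_0+\tau_1-d_0-d_2$ that multiply $-d_1$ in the remaining entries are all strictly positive; this is what lets me convert each ``entry $<0$'' into a lower bound on $d_1$ and then compare it against the hypothesis bound $d_1>\frac{2\tau_0\tau_1}{\tau_0+\tau_1}$.

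For $D_{11}$ and $D_{22}$ I would use one uniform comparison. To get $D_{11}=2\tau_1(\tau_0-d_0)-d_1(\tau_0+\tau_1-d_0)<0$ it suffices, since $\tau_0+\tau_1-d_0>0$ and $d_1>\frac{2\tau_0\tau_1}{\tau_0+\tau_1}$, to verify $2\tau_1(\tau_0-d_0)\le\frac{2\tau_0\tau_1}{\tau_0+\tau_1}(\tau_0+\tau_1-d_0)$; dividing by $2\tau_1>0$ collapses this to $\frac{\tau_0}{\tau_0+\tau_1}\le 1$, which is immediate. The pleasant feature is that this argument is insensitive to the sign of $\tau_0-d_0$, so no case split is needed. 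The entry $D_{22}$ is handled by the mirror computation with $\tau_0\leftrightarrow\tau_1$ and $d_0\to d_2$, reducing to $\frac{\tau_1}{\tau_0+\tau_1}\le 1$.

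The one entry requiring care is $D_{21}=2(\tau_0-d_0)(\tau_1-d_2)-d_1(\tau_0+\tau_1-d_0-d_2)$, because the product $A:=(\tau_0-d_0)(\tau_1-d_2)$ may be of either sign. If $A\le 0$ then $D_{21}\le -d_1(\tau_0+\tau_1-d_0-d_2)<0$ at once, by \eqref{majorarc}. If $A>0$ its two factors share a sign; they cannot both be negative, since $\tau_0<d_0$ and $\tau_1<d_2$ would give $\tau_0+\tau_1<d_0+d_2$, contradicting \eqref{majorarc}. Hence $\tau_0>d_0$ and $\tau_1>d_2$, and I would finish with the elementary identity
\[
\tau_0\tau_1(\tau_0+\tau_1-d_0-d_2)-(\tau_0-d_0)(\tau_1-d_2)(\tau_0+\tau_1)=\tau_0 d_2(\tau_0-d_0)+\tau_1 d_0(\tau_1-d_2)>0,
\]
which rearranges to $\frac{2\tau_0\tau_1}{\tau_0+\tau_1}>\frac{2(\tau_0-d_0)(\tau_1-d_2)}{\tau_0+\tau_1-d_0-d_2}$; combined with $d_1>\frac{2\tau_0\tau_1}{\tau_0+\tau_1}$ this yields $D_{21}<0$. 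Collecting the four signs completes the proof. I expect $D_{21}$ to be the main obstacle: the sign bookkeeping on $A$ and the use of \eqref{majorarc} to exclude the doubly-negative case is the only place where the bare ``reduce to a lower bound on $d_1$'' strategy fails, and it is exactly there that the major-arc geometry, rather than the hypothesis alone, is doing the work.
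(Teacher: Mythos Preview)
Your proof is correct. It is close in spirit to the paper's argument but organized differently, and the trade-offs are worth noting. The paper does a four-way case split on the signs of $\tau_0-d_0$ and $\tau_1-d_2$; in the all-positive case it simply uses the monotonicity $\frac{1}{\tau_i}<\frac{1}{\tau_i-d_i}$ to strengthen the hypothesis $\frac{2}{d_1}<\frac{1}{\tau_0}+\frac{1}{\tau_1}$ into the required inequality for each entry, and in the mixed cases the negativity is immediate. Your treatment of $D_{11}$ and $D_{22}$ is sleeker than the paper's --- the reduction to $\frac{\tau_i}{\tau_0+\tau_1}\le 1$ covers both sign possibilities at once, so no case split is needed there. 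On the other hand, for $D_{21}$ in the case $\tau_0>d_0$, $\tau_1>d_2$, the paper's one-line monotonicity argument is considerably lighter than your algebraic identity (which is correct, but overkill: you only need $\frac{1}{\tau_0-d_0}+\frac{1}{\tau_1-d_2}>\frac{1}{\tau_0}+\frac{1}{\tau_1}>\frac{2}{d_1}$). Either route buys the same conclusion; yours minimizes case analysis at the cost of a heavier computation for $D_{21}$, while the paper trades a uniform case split for very short per-case verifications.
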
 
\begin{proof}
Suppose $\frac{2}{d_1} < \frac{1}{\tau_0} + \frac{1}{\tau_1}$.
Note that this is equivalent to $D_{12}<0$. 
For the other three entries of $D_{x_0}F^2$,
we divide our analysis into the following cases:

\noindent{\bf Case 1.} $\tau_1-d_2>0$ and $\tau_0-d_0>0$:
In this case, $\frac{1}{\tau_0}<\frac{1}{\tau_0-d_0}$
and $\frac{1}{\tau_1}<\frac{1}{\tau_1-d_2}$.
Then 
\begin{enumerate}
\parskip=.1in
\item $\frac{2}{d_1} < \frac{1}{\tau_0-d_0} + \frac{1}{\tau_1}$
and hence $D_{11}=2\tau_1(\tau_0 -d_0) -d_1(\tau_0 -d_0+\tau_1)<0$;

\item $\frac{2}{d_1} < \frac{1}{\tau_0-d_0} + \frac{1}{\tau_1-d_2}$
and hence $D_{21}=2(\tau_1-d_2)(\tau_0 -d_0) -d_1(\tau_0 -d_0+\tau_1-d_2)<0$;

\item $\frac{2}{d_1} < \frac{1}{\tau_0} + \frac{1}{\tau_1-d_2}$
and hence $D_{22}=2(\tau_1 -d_2)\tau_0 -d_1(\tau_0+ \tau_1 -d_2)<0$.
\end{enumerate}

\noindent{\bf Case 2.}  $\tau_1-d_2>0$ and $\tau_0-d_0\le 0$:
Then $D_{22}<0$ since  $\frac{1}{\tau_1}<\frac{1}{\tau_1-d_2}$.
Moreover, $D_{11}$ and  $D_{21}$
are negative since both terms in $D_{11}$ and $D_{21}$ are negative, respectively.

\noindent{\bf Case 3.}  $\tau_1-d_2\le 0$ and $\tau_0-d_0>0$:
Then $D_{11}<0$ since $\frac{1}{\tau_0}<\frac{1}{\tau_0-d_0}$.
Moreover, $D_{21}$ and $D_{22}$
are negative since both terms in $D_{21}$ and $D_{22}$ are negative, respectively.

\noindent{\bf Case 4.} $\tau_1-d_2\le 0$ and $\tau_0-d_0\le 0$: this is impossible since
it contradicts  \eqref{majorarc}.

This completes the proof of the proposition.
\end{proof}

\begin{proof}[Proof of Proposition \ref{defocusing}.(1)]
Suppose $n_1=0$ and   $d_1\ge 2r$.
Since $Q(\phi_{\ast},R)$ is contained in the disk $D(O_r,r)$, we have $\tau_0< 2r$, and $\tau_1< 2r$.
It follows that $\frac{2}{d_1} \le \frac{1}{r} < \frac{1}{\tau_0} + \frac{1}{\tau_1}$.
Combining with Lemma \ref{d1large}, we see that
the orbit segment $(x_0, x_1, x_2)$ has  negative derivative.
This finishes the proof.
\end{proof}

Now we prove the second item in  Proposition \ref{defocusing}.
We will  show that $(F^{-1} x_0, x_0, x_1, x_2, F x_2)$  has negative derivative
when $n_1=0$ and $d_1<  2r$. In this case we have
$\theta_1\in (0, \Psi_R) \cup (\pi -\Psi_R, \pi)$.
Applying the symmetry of the billiard table $Q(\phi_{\ast},R)$,
it suffices to consider the case that $\theta_1\in (0, \Psi_R)$.

It follows from Lemma \ref{smalld1}, more precisely, from  Eq.~\eqref{estitheta2},
that
\begin{align}
|d_i - r\sin\phi_{\ast} | &= |r\sin \theta_i - r\sin\phi_{\ast}| < \frac{8.3 r^2}{R\sin \phi_{\ast}}, 
\quad i= 0, 2.
\label{estidi}
\end{align}
For $R\ge \frac{100r}{\sin^2\phi_{\ast}}$, we have  $\frac{8.3 r^2}{R\sin \phi_{\ast}}<\frac{1}{12}r\sin\phi_{\ast}$.
Therefore, $\frac{11}{12}r\sin\phi_{\ast}< d_i < \frac{13}{12}r\sin\phi_{\ast}$, $i=0,2$.

Note that the arc-length 
$|\Gamma_R|=2R\Phi_{\ast} < 2R\sin \Phi_{\ast} + 2R\cdot \frac{1}{3}\Phi_{\ast}^3 =2r\sin\phi_{\ast} + \frac{2R}{3}\Phi_{\ast}^3$.
Combining this with \eqref{estiPhiA} and \eqref{estiphi2}, we have
\begin{align}
 2r\sin\phi_{\ast}<\tau_0 +\tau_1 
& <  |\Gamma_R| + r\cdot |\phi_0 + \phi_{\ast}| + r\cdot |\phi_2 - \phi_{\ast} |  \nonumber   \\
& < 2r\sin\phi_{\ast} + \frac{2R}{3}\Phi_{\ast}^3 + \frac{12r^2}{R\sin \phi_{\ast}} + \frac{12r^2}{R\sin \phi_{\ast}} 
< 2r\sin\phi_{\ast} +  \frac{24.1 r^2}{R\sin \phi_{\ast}},\label{tau0tau1}
\end{align}
since $\frac{2R}{3}\Phi_{\ast}^3<  \frac{0.1 r^2}{R\sin \phi_{\ast}}$
for $R\ge 100r$.

Combining \eqref{majorarc0} and \ref{majorarc} with \eqref{estidi} and \eqref{tau0tau1}, we get that
\begin{align}
0<\tau_0 +\tau_1 - 2d_i  &< \frac{40.7 r^2}{R\sin \phi_{\ast}}, \quad i=0, 2; \label{tau0tau1di}\\
0<\tau_0 +\tau_1 - d_0 - d_2& <  \frac{40.7 r^2}{R\sin \phi_{\ast}}. \label{tau0tau1d0d2}
\end{align}

Let $D=(d_1d_2)D_{x_0}F^2$ be the matrix given by \eqref{DF2}, which corresponds to 
the tangent map along the  orbit segment $(x_0, x_1, x_2)$.
However,  the orbit segment $(x_0, x_1, x_2)$ may fail the negative derivative condition
for some point $x\in \hM$ with $d_1\le 2r$.
We need to consider the extended segment $(F^{-1}x_0, x_0, x_1, x_2, F x_2)$,
whose tangent map is (modulo the coefficient $\frac{1}{d_1d_2}$)
\begin{align}
G:=(d_1d_2)\cdot D_{F^{-1}x_0}F^4 =&\begin{bmatrix} 1 & 2 \\ 0 & 1 \end{bmatrix}
\cdot (d_1d_2) D_{x_0}F^2 
\cdot\begin{bmatrix} 1 & 2 \\ 0 & 1 \end{bmatrix}
=\begin{bmatrix} 1 & 2 \\ 0 & 1 \end{bmatrix}
\cdot \begin{bmatrix} D_{11} & D_{12} \\  D_{21} & D_{22} \end{bmatrix}
\cdot\begin{bmatrix} 1 & 2 \\ 0 & 1 \end{bmatrix}  \nonumber  \\
=&\begin{bmatrix} 
D_{11}+2D_{21}  & 2D_{11}+4D_{21} + D_{12}+2D_{22} \\  
D_{21} & 2D_{21}+ D_{22} 
\end{bmatrix}.
\end{align}

\begin{proof}[Proof of Theorem \ref{defocusing}.(2)]
We will show that all four entries of the matrix $G$ are  negative.
We will argue in the following order: the $(2,1)$-entry,  the $(1,1)$-entry, the $(2,2)$-entry and the $(1,2)$-entry.

\noindent{\bf  The $(2,1)$-entry.} Note that
\begin{align}
G_{21} = D_{21} 
=&2(\tau_1-d_2)(\tau_0-d_0)-d_1(\tau_0 +\tau_1 -d_0 -d_2). 
\end{align}
Note that $\tau_0 +\tau_1 -d_0 -d_2>0$. 
This term is clearly negative if $(\tau_1-d_2)(\tau_0-d_0)\le 0$. 
So we are left with the case
 $(\tau_1-d_2)(\tau_0-d_0)> 0$. Since $\tau_0 +\tau_1 -d_0 -d_2>0$, it follows
that $\tau_1-d_2>0$ and $\tau_0-d_0>0$. 
Since the billiard table is the intersection of two disks, we have 
$\tau_0< d_0 + d_1$ and $\tau_1< d_1 + d_2$. Putting them together, we have 
$0<\tau_0-d_0< d_1$, $0<\tau_1-d_2< d_1$ and hence 
\begin{align}
\frac{2}{d_1} = \frac{1}{d_1}+  \frac{1}{d_1} 
<\frac{1}{\tau_1-d_2} + \frac{1}{\tau_0-d_0}. 
\end{align}
It follows that $G_{21}<0$.

\noindent{\bf  The $(1,1)$-entry.}   Note that
\begin{align}
G_{11} =D_{11}+ 2 D_{21}
=&6(\tau_1- \frac{2}{3}d_2)(\tau_0-d_0)-3d_1(\tau_0 +\tau_1 -d_0 - \frac{2}{3}d_2).
\end{align}
This term is clearly negative if $(\tau_1- \frac{2}{3}d_2)(\tau_0 - d_0)\le 0$. So we are left with the case
 $(\tau_1-  \frac{2}{3}d_2)(\tau_0-d_0)> 0$, which implies
that $\tau_1- \frac{2}{3}d_2>0$ and $\tau_0-d_0>0$. 
We claim that $\tau_1< d_1 + \frac{2}{3}d_2$
when $R\ge \frac{128.6 r}{\sin^2\phi_{\ast}}$.
Then using $0<\tau_0-d_0< d_1$ again, 
we have 
\begin{align}
\frac{2}{d_1} = \frac{1}{d_1}+  \frac{1}{d_1}  <\frac{1}{\tau_1- \frac{2}{3}d_2} + \frac{1}{\tau_0-d_0}, 
\end{align}
and hence $G_{11}<0$ for $R\ge \frac{128.6 r}{\sin^2\phi_{\ast}}$.
\begin{proof}[Proof of Claim]
We will prove by contradiction. Suppose on the contrary that 
$\tau_1\ge d_1 + \frac{2}{3}d_2$. It follows that $d_1 > \frac{2}{3}d_2$ since $\tau_1< 2d_1$.
Combining with $\tau_0> d_0$ and  \eqref{tau0tau1d0d2}, we get
we get 
\begin{align}
d_0+ \frac{4}{3}d_2 < d_0+d_1 + \frac{2}{3}d_2
< \tau_0+\tau_1 < d_0+ d_2 + \frac{40.7 r^2}{R\sin\phi_{\ast}}. \nonumber 
\end{align}
Applying \eqref{estidi}, we get 
$\frac{1}{3}r\sin\phi_{\ast} -\frac{8.3 r^2}{3R\sin\phi_{\ast}} <\frac{1}{3}d_2<  \frac{40.7 r^2}{R\sin\phi_{\ast}}$,
which is impossible for $R\ge \frac{128.6 r}{\sin^2\phi_{\ast}}$.
This completes the proof.
\end{proof}

\noindent{\bf  The $(2,2)$-entry.}   Note that
\begin{align}
G_{22} =2 D_{21}+ D_{22}
=&6(\tau_1-d_2)(\tau_0-\frac{2}{3}d_0) - 3d_1(\tau_0 +\tau_1 -\frac{2}{3}d_0 -d_2).
\end{align}
This term is clearly negative if $(\tau_1-d_2)(\tau_0-\frac{2}{3}d_0)\le 0$. So we are left with the case
 $(\tau_1-d_2)(\tau_0 - \frac{2}{3}d_0)> 0$, which implies
that $\tau_1- d_2>0$ and $\tau_0- \frac{2}{3}d_0>0$. 
Following the same argument  used for the $(1,1)$-entry, 
we obtain  that $\tau_0< d_1 + \frac{2}{3}d_0$ for $R\ge \frac{128.6 r}{\sin^2\phi_{\ast}}$.
Then using $0<\tau_1-d_2< d_1$ again, 
we have 
\begin{align}
\frac{2}{d_1} = \frac{1}{d_1}+  \frac{1}{d_1}  <\frac{1}{\tau_1- d_2} + \frac{1}{\tau_0- \frac{2}{3}d_0},
\end{align}
and hence $G_{22}<0$ for $R\ge \frac{128.6 r}{\sin^2\phi_{\ast}}$.

\noindent{\bf  The $(1,2)$-entry.} Note that
\begin{align}
G_{12} &= 2D_{11}+4D_{21} + D_{12} + 2D_{22}  \nonumber \\
&=18(\tau_1-  \frac{2}{3}d_2)(\tau_0-  \frac{2}{3}d_0) - 9d_1(\tau_0 +\tau_1 - \frac{2}{3}d_0 - \frac{2}{3}d_2).
\end{align}
This term is clearly negative if $(\tau_1-  \frac{2}{3}d_2)(\tau_0-  \frac{2}{3}d_0)\le 0$. So we are left with the case
 $(\tau_1-  \frac{2}{3}d_2)(\tau_0-  \frac{2}{3}d_0)> 0$, which implies
that $\tau_1- \frac{2}{3}d_2>0$ and $\tau_0-\frac{2}{3}d_0>0$. 
In this case, $G_{12}<0$ if and only if 
\begin{align}
\frac{2}{d_1}  <\frac{1}{\tau_1- \frac{2}{3}d_2} + \frac{1}{\tau_0- \frac{2}{3}d_0}.
\label{n10G12}
\end{align}
We further divide our analysis into three subcases:

\noindent{\bf Case 1.} $\tau_0<\frac{2}{3}d_0+ d_1$ and $\tau_1< d_1 + \frac{2}{3}d_2$.
Then \eqref{n10G12} holds and hence $G_{12}<0$.

\noindent{\bf Case 2.} $\tau_0\ge \frac{2}{3}d_0+ d_1$. Combining with \eqref{tau0tau1d0d2},
we get $d_0+ d_2 + \frac{40.7 r^2}{R\sin\phi_{\ast}} > \tau_0 +\tau_1
\ge  \frac{2}{3}d_0+ d_1+ \tau_1$.
Therefore, $\tau_1 < \frac{1}{3}d_0+ d_2 -d_1 + \frac{40.7 r^2}{R\sin\phi_{\ast}}$. 
Combining $\tau_0\ge \frac{2}{3}d_0+ d_1$ with $2d_1>\tau_0$, 
we get $d_1> \frac{2}{3}d_0$.
Combining these with \eqref{estidi} for $d_0$, we have
\begin{align}
0<\tau_1- \frac{2}{3}d_2& < \frac{1}{3}d_0+ \frac{1}{3}d_2 -d_1 + \frac{40.7 r^2}{R\sin\phi_{\ast}}
<\frac{1}{3}d_2 -\frac{1}{3}d_0  + \frac{40.7 r^2}{R\sin\phi_{\ast}}  \nonumber   \\
&<\frac{16.6 r^2}{3R\sin\phi_{\ast}} + \frac{40.7 r^2}{R\sin\phi_{\ast}}
 \le \frac{1}{3}r\sin\phi_{\ast} -\frac{8.3 r^2}{3R\sin\phi_{\ast}}  <\frac{d_0}{3}< \frac{d_1}{2},
\end{align}
for $R\ge \frac{147 r}{\sin^2\phi_{\ast}}$.
Then  \eqref{n10G12} holds and hence  $G_{12}<0$ for $R\ge \frac{147 r}{\sin^2\phi_{\ast}}$.

\noindent{\bf Case 3.} $\tau_1\ge  d_1 + \frac{2}{3}d_2$. Using the same argument in Case 2,  we have
$0<\tau_0- \frac{2}{3}d_0<  \frac{d_1}{2}$
for $R\ge \frac{147 r}{\sin^2\phi_{\ast}}$. Then  \eqref{n10G12} holds and hence  $G_{12}<0$.

Collecting terms, we see that the orbit segment $(F^{-1}x_0, x_0, x_1, x_2, F x_2)$  
has negative derivative  for  $R\ge \frac{147 r}{\sin^2\phi_{\ast}}$.
This completes the proof of the second item of Proposition \ref{defocusing}.
\end{proof}

\section{Orbit segments with multiple reflections on $\Gamma_R$}\label{n1ge1}

In this section we consider points $x\in \hM$ whose orbit segments $(F^k x)_{0\le k \le \sigma(x)}$ 
have two or more  reflections on $\Gamma_R$.
We will  reuse most of the notations from Section \ref{n1=0}. 
Recall that $x_0=(\phi_0,\theta_0) \in M_{r}^{out}$, $x_1=(\phi_1,\theta_1)= F x_0 \in M_R^{in}$,
$n_1 \ge 1$ with $F^{n_1} x_1 \in M_{R}^{out}$,
$x_2=(\phi_2,\theta_2)=F^{n_1+1} x_1 \in M_{r}^{in}$,
$d_0=r\sin\theta_0$, $d_1=R\sin\theta_1$ and $d_2 = r\sin\theta_{2}$.
The intermediate points of this orbit segment on $\Gamma_R$ are 
$F^k x_1 =(\phi_1 + 2k \theta_1, \theta_1)$, $1\le k\le n_1$.
Let $\tau_0$ be the distance from $p(x_0)$ to $p(x_1)$,
$\tau_1$ be the distance from $p( F^{n_1} x_1)$ to $p(x_2)$. See Fig.~\ref{fig.n11} for an illustration.
We will show that the segment $(F^{-1}x_0, x_0, x_1,\dots, F^{n_1} x_1, x_2, F x_2)$
has positive derivative.

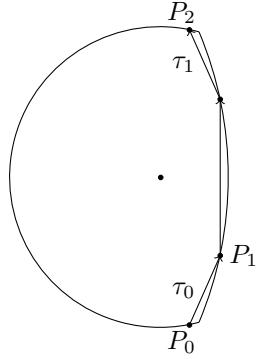
\begin{figure}[h]\tikzmath{
\a=75;
\b=22.73; 
}
\centering
\begin{tikzpicture}
\coordinate (OR) at (0,0);
\coordinate (Or) at (4.1,0);
\filldraw (4.1,0) circle(.03);
\coordinate (A) at ({5*cos(22.8)},{5*sin(22.8)});
\draw (A) arc (\a:(360 - \a):2);
\draw (A) arc (\b: -\b :5);
\coordinate (P) at ({4.1+2*cos(-79)},{2*sin(-79)});
\filldraw (P) circle(.03);
\coordinate (P1) at ({5*cos(-12)},{5*sin(-12)});
\filldraw (P1) circle(.03) node[right] {$P_1$};
\coordinate (P2) at ({5*cos(12)},{5*sin(12)});
\filldraw (P2) circle(.03);
\coordinate (P3) at  ({4.1+2*cos(79)},{2*sin(79)});
\filldraw (P3) circle(.03);
\draw[->] (P) -- (P1)  node[pos=-.25] {$P_0$} node[pos=0.5, left]{$\tau_0$} ;
\draw[->] (P1) -- (P2);
\draw[->] (P2) -- (P3)  node[pos=0.5,left]{$\tau_1$}  node[pos=1.25] {$P_2$};
\end{tikzpicture}
\caption{An illustration when $n_1=1$. Again $P_0=p(x_0)$, $P_1=p(x_1)$, and $P_2=p(x_2)$.}\label{fig.n11}
\end{figure}

Since the segment  $(x_1,\dots, F^{n_1}x_1)$ is on the same arc $\Gamma_R$, 
the tangent map along this segment is given by
\begin{align*}
D_{x_1}F^{n_1}=\Big(\begin{bmatrix} 1 & 2  \\ 0 & 1 \end{bmatrix}\Big)^{n_1}
=\begin{bmatrix} 1 & 2 n_1 \\ 0 & 1 \end{bmatrix}.
\end{align*}
Then the tangent map $D_{x_0}F^{n_1+2}$
along the orbit segment $(x_0, x_1,\dots, F^{n_1} x_1, x_2)$ is given by 
\begin{align*}
D_{x_0}F^{n_1+2}=DF\circ DF^{n_1} \circ DF
=\frac{1}{d_2}\begin{bmatrix} \tau_1 -d_1 & \tau_1 \\ \tau_1-d_1-d_2 & \tau_1- d_2 \end{bmatrix}
\cdot\begin{bmatrix} 1 & 2 n_1 \\ 0 & 1 \end{bmatrix}
\cdot\frac{1}{d_1}\begin{bmatrix} \tau_0 -d_0 & \tau_0 \\ \tau_0-d_0-d_1 & \tau_0- d_1 \end{bmatrix}.
\end{align*}

We introduce a matrix modulo the scalar $d_1d_2$:
\begin{align}
D:=&(d_1 d_2)D_{x_0}F^{n_1+2} 
=\begin{bmatrix} \tau_1 -d_1 & \tau_1 \\ \tau_1-d_1-d_2 & \tau_1- d_2 \end{bmatrix}
\Big(I+\begin{bmatrix} 0 & 2n_1 \\ 0 & 0 \end{bmatrix}\Big)
\cdot\begin{bmatrix} \tau_0 -d_0 & \tau_0 \\ \tau_0-d_0-d_1 & \tau_0- d_1 \end{bmatrix}  \nonumber  \\
=&\begin{bmatrix} 
(\tau_1 -d_1)(\tau_0 -d_0) +\tau_1( \tau_0-d_0-d_1)
& (\tau_1 -d_1)\tau_0 +\tau_1( \tau_0-d_1)   \nonumber  \\ 
(\tau_1 -d_1 -d_2)(\tau_0 -d_0) +(\tau_1-d_2)( \tau_0-d_0-d_1)
& (\tau_1 -d_1 -d_2)\tau_0 +(\tau_1-d_2)( \tau_0 -d_1)
\end{bmatrix}\\
+&2n_1
\begin{bmatrix} (\tau_1 -d_1)(\tau_0-d_0-d_1) 
& (\tau_1 -d_1)(\tau_0- d_1)  \\ 
(\tau_1-d_1-d_2)(\tau_0-d_0-d_1) 
& (\tau_1-d_1-d_2)(\tau_0- d_1) \end{bmatrix} .
\end{align}

Set $p=\frac{n_1}{n_1+1}$ for short.
Then the four entries of the matrix $D$ can be written as:
\begin{align}
D_{11}&=(\tau_1 -d_1)(\tau_0 -d_0) +\tau_1( \tau_0-d_0-d_1) +
2n_1(\tau_1 -d_1)(\tau_0-d_0-d_1)  \nonumber  \\
&=(n_1+1)\Big((\tau_1 -d_1)(\tau_0 - d_0 - pd_1)+(\tau_1-pd_1)( \tau_0-d_0-d_1)\Big). 
\end{align}

\begin{align}
D_{12}&= (\tau_1 -d_1)\tau_0 +\tau_1( \tau_0-d_1) +
2n_1(\tau_1 -d_1)(\tau_0- d_1) \nonumber \\
&=(n_1+1)\Big((\tau_1 -d_1)(\tau_0 -pd_1) 
+(\tau_1-pd_1)( \tau_0-d_1)\Big).
\end{align}

\begin{align}
D_{21}&= (\tau_1 -d_1 -d_2)(\tau_0 -d_0) +(\tau_1-d_2)( \tau_0-d_0-d_1) +
2n_1(\tau_1-d_1-d_2)(\tau_0-d_0-d_1)  \nonumber \\
&= (n_1+1)\Big((\tau_1 -d_1 -d_2)(\tau_0 -d_0-pd_1) 
+(\tau_1-pd_1-d_2)( \tau_0-d_0-d_1)\Big).
\end{align}

\begin{align}
D_{22}&= (\tau_1 -d_1 -d_2)\tau_0 +(\tau_1-d_2)( \tau_0 -d_1) +
2n_1(\tau_1-d_1-d_2)(\tau_0- d_1) \nonumber  \\
&=(n_1+1)\Big((\tau_1 -d_1 -d_2)(\tau_0-pd_1) 
+(\tau_1-pd_1-d_2)( \tau_0-d_1)\Big).
\end{align}

The tangent map $D_{F^{-1}x_0}F^{n_1+4}$ 
along the orbit segment $(F^{-1}x_0, x_0, x_1,\dots, F^{n_1} x_1, x_2, F x_2)$
(again, modulo the scalar $\frac{1}{d_1d_2}$) is
\begin{align}
G:= (d_1d_2)\cdot D_{F^{-1}x_0}F^{n_1+4}
=& \begin{bmatrix} 1 & 2 \\ 0 & 1 \end{bmatrix}
\cdot \begin{bmatrix} D_{11} & D_{12} \\  D_{21} & D_{22} \end{bmatrix}
\cdot\begin{bmatrix} 1 & 2 \\ 0 & 1 \end{bmatrix}
=\begin{bmatrix} D_{11}+2D_{21} & D_{12}+2D_{22} \\  D_{21} & D_{22} \end{bmatrix}
\cdot\begin{bmatrix} 1 & 2 \\ 0 & 1 \end{bmatrix}  \nonumber \\
=&\begin{bmatrix} 
D_{11}+2D_{21} & 2D_{11}+4D_{21} + D_{12}+2D_{22} \\  
D_{21} & 2D_{21}+ D_{22} 
\end{bmatrix},
\end{align}
where the four entries of the matrix $\hat G:= \frac{1}{n_1+1} G$ (modulo a common factor $n_1+1$) are
\begin{align}
\hat G_{11}&:=\frac{1}{n_1+1}(D_{11}  + 2D_{21})  \nonumber \\
&=3(\tau_1 -d_1 -\frac{2}{3}d_2)(\tau_0 -d_0-pd_1) + 3(\tau_1 -pd_1 -\frac{2}{3}d_2)( \tau_0-d_0-d_1),
\label{hatG11}
\end{align}

\begin{align}
\hat G_{12}&:=\frac{1}{n_1+1}(2D_{11}  +4D_{21} +  D_{12}+2D_{22}) \nonumber  \\
&=9(\tau_1 -d_1 -\frac{2}{3}d_2)(\tau_0 -\frac{2}{3}d_0-pd_1) + 9(\tau_1-pd_1-\frac{2}{3}d_2)(\tau_0-\frac{2}{3}d_0-d_1),
\label{hatG12}
\end{align}

\begin{align}
\hat G_{21}&:=\frac{1}{n_1+1}D_{21}  \nonumber \\
&= (\tau_1 -d_1 -d_2)(\tau_0 -d_0-pd_1) 
+(\tau_1-pd_1-d_2)( \tau_0-d_0-d_1),
\label{hatG21}
\end{align}

\begin{align}
\hat G_{22}&:=\frac{1}{n_1+1}(D_{22}  + 2D_{21})  \nonumber \\
&=3(\tau_1 -d_1 -d_2)(\tau_0 -\frac{2}{3}d_0 - pd_1) +3(\tau_1 -pd_1 -d_2)(\tau_0 -\frac{2}{3}d_0 -d_1).
\label{hatG22}
\end{align}

We have the following observation:
\begin{pro}\label{multi} 
Let $p=\frac{n_1}{n_1 +1}$.
If $\tau_0< \frac{2}{3}d_0+ pd_1$ and $\tau_1< pd_1 +\frac{2}{3}d_2$,
then the orbit segment $(F^{-1} x_0, \dots, F x_2)$ has positive derivative.
\end{pro}
\begin{proof}
If $\tau_0< \frac{2}{3}d_0+ pd_1$ and $\tau_1< pd_1 +\frac{2}{3}d_2$,
then all four entries of the matrix $\hat G$ are positive.
Therefore, all four entries of $DF^{n_1 +4}$ along the orbit segment $(F^{-1} x_0, \dots, F x_2)$
are positive. It follows that the orbit segment $(F^{-1} x_0, \dots, F x_2)$ has positive derivative.
\end{proof}

\begin{pro}\label{n1=2good}
Let  $R\ge 33.2r$.
If $n_1\ge 2$, then
$\tau_0< \frac{2}{3}d_0+ \frac{2}{3}d_1$ and $\tau_1< \frac{2}{3}d_1 +\frac{2}{3}d_2$.
\end{pro}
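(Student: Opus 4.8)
The goal is to show that when $n_1\ge 2$, the free paths $\tau_0$ and $\tau_1$ cannot be too long; specifically $\tau_0<\frac{2}{3}d_0+\frac{2}{3}d_1$ and $\tau_1<\frac{2}{3}d_1+\frac{2}{3}d_2$. Since the table is the intersection of two disks, we always have $\tau_0<d_0+d_1$ and $\tau_1<d_1+d_2$; the point is to improve the constant from $1$ to $\frac{2}{3}$ when there are at least two reflections on $\Gamma_R$. The key geometric fact driving this is that $n_1\ge 2$ forces $\theta_1$ to be very small — indeed $\theta_1<\Phi_\ast$ when $n_1\ge 1$, and more sharply $\theta_1<\Phi_\ast/\text{(something)}$ when $n_1\ge 2$ — so $d_1=R\sin\theta_1$ is small relative to $d_0,d_2\approx r\sin\phi_\ast$. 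By symmetry (reductions R1 and R2 from the proof of Lemma \ref{smalld1}) it suffices to treat $\tau_0$ and assume $\theta_1\in(0,\Phi_\ast)$.

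The plan is as follows. First I would pin down the constraint $n_1\ge 2$ imposes on $\theta_1$: the intermediate points $F^kx_1=(\phi_1+2k\theta_1,\theta_1)$ for $1\le k\le n_1$ must all lie in $\Gamma_R=(-\Phi_\ast,\Phi_\ast)$, which with $n_1\ge 2$ squeezes $\theta_1$ into roughly $(0,\Phi_\ast/2)$, hence $d_1=R\sin\theta_1<R\Phi_\ast/2\approx \tfrac{1}{2}r\sin\phi_\ast$. Combined with the estimates from Lemma \ref{smalld1} — that $x_0\in V(\delta_\ast)$, so $\phi_0$ is near $-\phi_\ast$ and $\theta_0$ near $\phi_\ast$, giving $d_0,d_2\in(\tfrac{11}{12}r\sin\phi_\ast,\tfrac{13}{12}r\sin\phi_\ast)$ for $R$ large (cf.\ \eqref{estidi}) — this gives $d_1\lesssim\tfrac{1}{2}d_0$. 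Second, I would estimate $\tau_0$ directly. The chord $P_0P_1$ from $\Gamma_r$ to $\Gamma_R$ has length $\tau_0$; extending it backward to the second intersection $\bar P$ with $\partial D(O_R,R)$ gives a full chord of $\partial D(O_R,R)$ of length $2d_1\cos(\text{angle})$, and the piece from $\bar P$ to $P_0$ lies outside $\Gamma_R$. So $\tau_0$ is at most the length of the chord of $D(O_r,r)$ through $P_0$ in direction $x_0$, which is $2d_0=2r\sin\theta_0$, but this only recovers the trivial bound. The sharper route: decompose $\tau_0$ as the distance from $P_0$ to the foot of perpendicular from $O_R$, plus the distance from that foot to $P_1$; the latter is $d_1=R\sin\theta_1$ (half-chord in $D(O_R,R)$), and the former is bounded using the near-tangency. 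Alternatively — and this is cleaner — use $\tau_0<d_0+d_1$ together with a lower bound $\tau_0>d_0-(\text{small})$ is the wrong direction; instead I expect one writes $\tau_0 = $ (distance $P_0$ to $Q$) $+ d_1$ where $Q$ is the perpendicular foot from $O_r$ onto $P_0P_1$ — wait, one must be careful which center. The honest approach is: the chord of $D(O_r,r)$ along $x_0$ through $P_0$ has half-length $d_0$; $P_1$ lies on this chord at distance $\tau_0$ from $P_0$, and $P_1\in\Gamma_R\subset\partial D(O_R,R)$ which for large $R$ is nearly the chord $AB$; so $\tau_0$ is close to the distance from $P_0$ to line $AB$ along direction $x_0$, which is exactly $d_0$ when $x_0=x_\ast$. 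Quantitatively, since $x_0\in V(\delta_\ast)$ and $\theta_1$ is small, $\tau_0 = d_0 + d_1 + O(1/R)$ or even $\tau_0 < d_0 + d_1$ with a genuine gap, and since $d_1<\tfrac{1}{2}d_0\le\tfrac{1}{2}(d_0+d_1)\cdot\tfrac{?}{}$... I would organize the arithmetic so that $\tau_0<d_0+d_1$ and $d_1<\tfrac{2}{3}d_1+\tfrac{?}$: concretely, $\tau_0<d_0+d_1<\tfrac{2}{3}d_0+d_1 + \tfrac{1}{3}d_0$, and I want this $<\tfrac{2}{3}d_0+\tfrac{2}{3}d_1$, i.e.\ $\tfrac{1}{3}d_1<\tfrac{?}$ — no. The correct bookkeeping must use that $\tau_0$ is actually strictly less than $d_0$ plus a small correction, not $d_0+d_1$; the role of $d_1$ being small is what closes the gap. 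I would get $\tau_0 < d_0 + (\text{small multiple of }d_1) < \tfrac23 d_0 + \tfrac23 d_1$ once $R$ is large enough that the "small" terms are dominated, e.g.\ $R\ge 33.2r$.

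The main obstacle is the second step: getting a clean quantitative upper bound on $\tau_0$ (and symmetrically $\tau_1$) that beats the trivial $\tau_0<d_0+d_1$ by enough to reach the $\tfrac23$ constant. This requires carefully exploiting that $\bar\phi=\phi_1-2\theta_1$ lies just outside $(-\Phi_\ast,\Phi_\ast)$ — the same comparison-of-positions argument as in Case 2 of Lemma \ref{smalld1}, yielding $\phi_0$ within $O(r\sin\phi_\ast/R)$ of $-\phi_\ast$ — and then expressing $\tau_0$ via the law of cosines in triangle $O_rP_0P_1$ or $O_RP_0P_1$ using \eqref{push1}, \eqref{push2}. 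I expect the computation to parallel \eqref{esti.theta0}–\eqref{estitheta2n1} closely, with the $n_1\ge 2$ bound $\theta_1<\Phi_\ast$ (vs.\ $\theta_1<\Psi_R$ when $n_1=0$) providing the extra smallness in $d_1$. Once $d_1 \le \tfrac12 d_0 + O(r\sin\phi_\ast/R)$ and $\tau_0 \le d_0 + d_1 - (\text{comparable gap})$ are both in hand, the inequalities $\tau_0<\tfrac23 d_0+\tfrac23 d_1$ and $\tau_1<\tfrac23 d_1+\tfrac23 d_2$ follow by combining with the two-sided bounds $\tfrac{11}{12}r\sin\phi_\ast<d_i<\tfrac{13}{12}r\sin\phi_\ast$ for $i=0,2$ and choosing $R\ge 33.2r$; I would then invoke Proposition \ref{multi} in the next step to conclude positive defocusing.
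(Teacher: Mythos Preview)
Your proposal has a real gap: you never identify the ``total length'' constraint that drives the paper's proof, and without it your decoupled estimates cannot close the case $n_1=2$.

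The paper's argument rests on the single estimate
\[
2r\sin\phi_\ast \;<\; \tau_0 + 2n_1 d_1 + \tau_1 \;<\; 2r\sin\phi_\ast + \frac{8.1\,r^2\sin\phi_\ast}{R},
\]
which says that the broken path from $P_0$ to $P_2$ has total length essentially equal to the chord $|AB|$. From this and $\tau_1>0$ one gets $2d_0 > \tau_0 + 2n_1 d_1 - O(r^2/R)$. Writing $\alpha=\tau_0/d_1\in(0,2)$ (using $\tau_0<2d_1$, since $P_0$ lies inside $D(O_R,R)$), one computes directly
\[
\tfrac{2}{3}d_0 + \tfrac{2}{3}d_1 - \tau_0 \;>\; \tfrac{2n_1+2-2\alpha}{3}\,d_1 - O(r^2/R) \;>\; \tfrac{2n_1-2}{3}\,d_1 - O(r^2/R).
\]
The same total-length inequality, combined with $\tau_i<2d_1$, gives the \emph{lower} bound $d_1>\frac{r\sin\phi_\ast}{n_1+2}$; plugging this in yields a positive quantity bounded below by $\frac{1}{6}r\sin\phi_\ast - O(r^2/R)$ for all $n_1\ge 2$, which is positive once $R\ge 33.2r$.

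Your plan goes the opposite way: you extract an \emph{upper} bound $d_1\lesssim\frac{1}{2}r\sin\phi_\ast$ from $\theta_1<\Phi_\ast/n_1$ and then try to bound $\tau_0$ independently (via $\tau_0<d_0+d_1$, or by a direct geometric estimate you never quite pin down). But the target inequality $\tau_0<\frac{2}{3}(d_0+d_1)$ is \emph{not} a consequence of any such separate bounds when $n_1=2$: for $n_1=2$ one can have $d_1$ arbitrarily close to $\tfrac12 r\sin\phi_\ast$ and, separately, $\tau_0$ arbitrarily close to $2d_1$; it is only the coupling through the total-length constraint that prevents both from happening at once. Your own arithmetic (``$\tau_0<d_0+d_1<\tfrac23 d_0 + d_1 + \tfrac13 d_0$'') exhibits the failure, and the later suggestion $\tau_0<d_0 + (\text{small})\cdot d_1$ still cannot yield $\tau_0<\tfrac23 d_0+\tfrac23 d_1$ since that would require $d_1>\tfrac12 d_0$, contrary to your upper bound. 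The missing idea is precisely the total-length relation and the use of $\alpha=\tau_0/d_1<2$ together with a lower bound on $d_1$.
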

\begin{proof}
Let $n_1\ge 2$ be given. 
We will give some preliminary estimates first.
There are exactly $n_1$ complete chords on $\Gamma_R$.
It follows from \eqref{estitheta2n1} that for $i=0, 2$
\begin{align}
|d_i-  r\sin\phi_{\ast}| = |r\sin\theta_i - r\sin\phi_{\ast}| < \frac{17 r^2\sin\phi_{\ast}}{4R}.
\label{d0d2n}
\end{align}
Using \eqref{estiphi2n1} and a similar argument as in \eqref{tau0tau1}, we get: 
\begin{align}
 2r\sin\phi_{\ast}< \tau_0 + 2n_1d_1 + \tau_1 <
 |\Gamma_R| + r\cdot |\phi_0+\phi_{\ast}| + r\cdot |\phi_2 - \phi_{\ast}|
 < 2 r\sin\phi_{\ast}+ \frac{8.1 r^2\sin\phi_{\ast}}{R},
 \label{eq.tau01n1}
\end{align}
for $R\ge 33r$. Since $\tau_i < 2d_1$, $i=0, 1$, 
we have
\begin{align}
2r\sin\phi_{\ast} < \tau_0 + 2n_1d_1 +\tau_1 < (2n_1+4) d_1, \label{d1n1}
\end{align}
and hence $d_1> \frac{r\sin\phi_{\ast}}{n_1+2}$.

Now we are ready to prove the two items in the proposition.
Set $\alpha = \frac{\tau_0}{d_1}$. Note that $\alpha\in(0,2)$.
Then
\begin{align}
2d_0 >2 r\sin\phi_{\ast} - \frac{17 r^2\sin\phi_{\ast}}{2R}  
>\tau_0 + 2n_1d_1 + \tau_1 -  \frac{16.6 r^2\sin\phi_{\ast}}{R}  
> \alpha d_1 + 2n_1 d_1 + 0 - \frac{16.6 r^2\sin\phi_{\ast}}{R}.
\label{tau0alpha}
\end{align}
It follows that 
\begin{align}
 \frac{2}{3}d_0+  \frac{2}{3}d_1 - \tau_0
 & >  \frac{1}{3} \Big(\alpha d_1+  2n_1d_1 - \frac{19r^2\sin\phi_{\ast}}{R}\Big) + \frac{2}{3}d_1- \alpha d_1  \nonumber \\
& = \frac{2n_1 + 2- 2\alpha}{3}d_1  - \frac{16.6 r^2\sin\phi_{\ast}}{3R}
>  \frac{2n_1-2}{3(n_1+2)}r\sin\phi_{\ast} - \frac{16.6 r^2\sin\phi_{\ast}}{3R}  \nonumber \\
&\ge \frac{1}{6}r\sin\phi_{\ast} - \frac{16.6 r^2\sin\phi_{\ast}}{3R},
\end{align}
since $\frac{2n_1-2}{3(n_1+2)} \ge \frac{1}{6}$ for $n_1\ge 2$.
Therefore, 
$\frac{2}{3}d_0+  \frac{2}{3}d_1 - \tau_0>0$ when $\frac{1}{6}r \ge \frac{16.6 r^2 }{3R}$.
The later holds for $R\ge  33.2 r$. 

In the same way we get $ \frac{2}{3}d_1 +\frac{2}{3}d_2 -\tau_1>0$ for $R\ge  33.2 r$.
This completes the proof.
\end{proof} 

Note that  $p=\frac{n_1}{n_1+1}\ge \frac{2}{3}$ for $n_1 \ge 2$.
Combining Proposition \ref{multi} with Proposition \ref{n1=2good},
we get
\begin{pro}\label{n1=2defocus}
Let $R\ge 33.2 r$ be fixed. Then for each $x\in \hM$, if $n_1\ge 2$,
then the orbit segment $(F^{-1} x_0, \dots, F x_2)$  has positive derivative.
\end{pro}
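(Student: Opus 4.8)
The plan is to deduce this immediately by combining the two preceding results, Theorem~\ref{multi} and Proposition~\ref{n1=2good}, via the elementary observation that $p=\frac{n_1}{n_1+1}\ge\frac23$ whenever $n_1\ge2$. No new geometric or algebraic input is needed: the substantive work has already been done in establishing the length estimates \eqref{d0d2n}--\eqref{d1n1} behind Proposition~\ref{n1=2good} and the matrix identities \eqref{hatG11}--\eqref{hatG22} behind Theorem~\ref{multi}.

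Concretely, I would fix $R\ge 33.2r$ and a point $x\in\hM$ with $n_1\ge2$, and recall from Section~\ref{shortn} the associated orbit segment $(F^{-1}x_0,x_0,x_1,\dots,F^{n_1}x_1,x_2,Fx_2)$ together with the quantities $\tau_0,\tau_1,d_0,d_1,d_2$. Proposition~\ref{n1=2good} supplies the two inequalities $\tau_0<\frac23 d_0+\frac23 d_1$ and $\tau_1<\frac23 d_1+\frac23 d_2$. Since $n_1\ge2$, the quantity $p=\frac{n_1}{n_1+1}=1-\frac{1}{n_1+1}$ is at least $\frac23$, so $\frac23 d_1\le p d_1$, and therefore
\begin{align*}
\tau_0<\tfrac23 d_0+\tfrac23 d_1\le \tfrac23 d_0+p d_1,\qquad
\tau_1<\tfrac23 d_1+\tfrac23 d_2\le p d_1+\tfrac23 d_2.
\end{align*}
These are exactly the hypotheses of Theorem~\ref{multi}, which I would then invoke to conclude that the orbit segment $(F^{-1}x_0,\dots,Fx_2)$ is positively defocusing.

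There is essentially no obstacle here; the only thing to verify is the monotonicity $p\ge\frac23$ for $n_1\ge2$, which is immediate. If anything deserves a word of care, it is merely making explicit that the conclusion of Proposition~\ref{n1=2good} (with the coefficient $\frac23$ on $d_1$) is \emph{weaker} than, hence implies, the bound needed to trigger Theorem~\ref{multi} (with the larger coefficient $p$ on $d_1$), so that the hypotheses line up correctly. I would then remark, as the text already notes, that this disposes of the case $n_1\ge2$, leaving only $n_1=1$ to be treated separately.
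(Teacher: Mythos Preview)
Your proposal is correct and matches the paper's approach exactly: the paper simply notes that $p=\frac{n_1}{n_1+1}\ge\frac{2}{3}$ for $n_1\ge 2$ and states Proposition~\ref{n1=2defocus} as an immediate consequence of combining Theorem~\ref{multi} with Proposition~\ref{n1=2good}.
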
 

For $n_1=1$, we have $p=\frac{n_1}{n_1+1}=\frac{1}{2}$,
and the four entries of the matrix $G$ given in
 \eqref{hatG11},  \eqref{hatG12},
 \eqref{hatG21} and  \eqref{hatG22} are 

\begin{align}
G_{11}& =6(\tau_1 -d_1 -\frac{2}{3}d_2)(\tau_0 -d_0-\frac{1}{2}d_1) 
+ 6(\tau_1-\frac{1}{2}d_1-\frac{2}{3}d_2)( \tau_0-d_0-d_1),\\
G_{12}& =18(\tau_1 -d_1 -\frac{2}{3}d_2)(\tau_0 -\frac{2}{3}d_0-\frac{1}{2}d_1) 
+ 18(\tau_1 -\frac{1}{2}d_1 -\frac{2}{3}d_2)( \tau_0 -\frac{2}{3}d_0 - d_1),\\
G_{21}& = 2(\tau_1 -d_1 -d_2)(\tau_0 -d_0-\frac{1}{2}d_1) +2(\tau_1-\frac{1}{2}d_1-d_2)( \tau_0-d_0-d_1),\\
G_{22}& =6(\tau_1 -d_1 -d_2)(\tau_0 -\frac{2}{3}d_0 - \frac{1}{2}d_1)
 +6(\tau_1 -\frac{1}{2}d_1 -d_2)( \tau_0 -\frac{2}{3}d_0 -d_1).
\end{align}
Then Proposition \ref{multi}  implies that the orbit segment $(F^{-1} x_0, \dots, F x_2)$ has positive derivative
if $\tau_0< \frac{2}{3}d_0+ \frac{1}{2}d_1$ and $\tau_1<  \frac{1}{2}d_1 +\frac{2}{3}d_2$.
In the remaining two subsections,
we will show that the orbit segment $(F^{-1} x_0, \dots, F x_2)$ has positive derivative
for the remaining cases.
Note that there do exist orbits such that either (1) $\tau_0\ge  \frac{2}{3}d_0+ \frac{1}{2}d_1$ 
or (2) $\tau_1\ge  \frac{1}{2}d_1 +\frac{2}{3}d_2$. See Fig.~\ref{fig.taularge}.
It follows from Eq.~\eqref{eq.tau01n1} that these two inequalities cannot hold simultaneously.

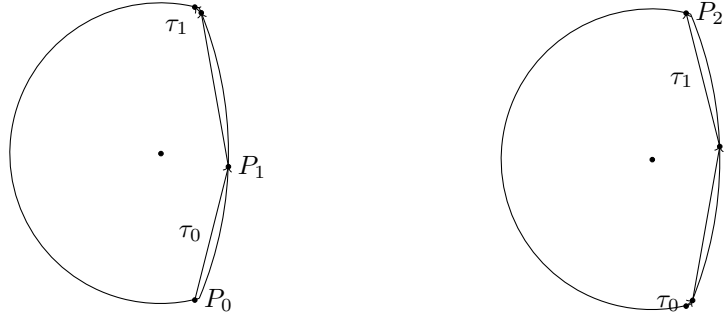
\begin{figure}[h]\tikzmath{
\a=75;
\b=22.73; 
}
\centering
\begin{tikzpicture}
\coordinate (OR) at (0,0);
\coordinate (Or) at (4.1,0);
\filldraw (4.1,0) circle(.03);
\coordinate (A) at ({5*cos(22.8)},{5*sin(22.8)});
\draw (A) arc (\a:(360 - \a):2);
\draw (A) arc (\b: -\b :5);
\coordinate (P) at ({4.1+2*cos(-77)},{2*sin(-77)});
\filldraw (P) circle(.03) node[right] {$P_0$} ;
\coordinate (P1) at ({5*cos(-2)},{5*sin(-2)});
\filldraw (P1) circle(.03) node[right] {$P_1$};
\coordinate (P2) at ({5*cos(22)},{5*sin(22)});
\filldraw (P2) circle(.03);
\coordinate (P3) at  ({4.1+2*cos(77)},{2*sin(77)});
\filldraw (P3) circle(.03);
\draw[->] (P) -- (P1) node[pos=0.5, left]{$\tau_0$} ;
\draw[->] (P1) -- (P2);
\draw[->] (P2) -- (P3)  node[pos=0.5,below left]{$\tau_1$};
\end{tikzpicture}
\hspace{1in}
\begin{tikzpicture}
\coordinate (OR) at (0,0);
\coordinate (Or) at (4.1,0);
\filldraw (4.1,0) circle(.03);
\coordinate (A) at ({5*cos(22.8)},{5*sin(22.8)});
\draw (A) arc (\a:(360 - \a):2);
\draw (A) arc (\b: -\b :5);
\coordinate (P) at ({4.1+2*cos(-77)},{2*sin(-77)});
\filldraw (P) circle(.03);
\coordinate (P1) at ({5*cos(-22)},{5*sin(-22)});
\filldraw (P1) circle(.03);
\coordinate (P2) at ({5*cos(2)},{5*sin(2)});
\filldraw (P2) circle(.03);
\coordinate (P3) at  ({4.1+2*cos(77)},{2*sin(77)});
\filldraw (P3) circle(.03);
\draw[->] (P) -- (P1) node[pos=0.5, left]{$\tau_0$} ;
\draw[->] (P1) -- (P2);
\draw[->] (P2) -- (P3)  node[pos=0.5,left]{$\tau_1$}  node[right] {$P_2$};
\end{tikzpicture}
\caption{An illustration for  $\tau_0\ge  \frac{2}{3}d_0+ \frac{1}{2}d_1$ (left)
and  $\tau_1\ge  \frac{1}{2}d_1 +\frac{2}{3}d_2$ (right).}\label{fig.taularge}
\end{figure}

\subsection{The subcase when $n_1=1$ and  $\tau_0\ge  \frac{2}{3}d_0+ \frac{1}{2}d_1$}
\label{n1tau0}
In this subsection we show that the orbit segment $(F^{-1} x_0, x_0, x_1, Fx_1, x_2, F x_2)$
has positive derivative for points $x\in \hM$ with $\tau_0\ge  \frac{2}{3}d_0+ \frac{1}{2}d_1$. 
We start with some preliminary estimates:

1). Combining $\tau_0\ge  \frac{2}{3}d_0+ \frac{1}{2}d_1$ with $\tau_0< 2d_1$, we see that
\begin{align}
d_1> \frac {4}{9}d_0 >  \frac {4}{9}\cdot(r\sin\phi_{\ast} - \frac{17r^2\sin\phi_{\ast}}{4R}) 
= \frac {4}{9}r\sin\phi_{\ast} -  \frac{17r^2\sin\phi_{\ast}}{9R}>\frac{13}{30}r\sin\phi_{\ast}.
\label{d1lower}
\end{align}
The last inequality holds for $R> 170r$.

2). Since $\tau_0< 2d_1$, we have $2d_0+  \frac{16.6 r^2\sin\phi_{\ast}}{R} > \tau_0+ 2d_1 +\tau_1 > 2\tau_0$. 
Therefore,
\begin{align}
\tau_0<  d_0+  \frac{8.3 r^2\sin\phi_{\ast}}{R}.
\label{tau0d0r}
\end{align}

3). Combining $2\tau_0< \tau_0 + 2 d_1 < 2 r\sin\phi_{\ast} + \frac{8.1 r^2\sin\phi_{\ast}}{R}$
with the condition $\tau_0 \ge \frac{2}{3}d_0 + \frac{1}{2}d_1$, we get
\begin{align}
d_1< 2 r\sin\phi_{\ast} + \frac{8.1 r^2\sin\phi_{\ast}}{R}
- \frac{4}{3}d_0 < \frac{2}{3}r\sin\phi_{\ast} + \frac{13.8 r^2\sin\phi_{\ast}}{R} \le  0.7r\sin\phi_{\ast}.
\label{d1upper}
\end{align}
The last inequality holds for $R \ge  414 r$.

4). Since $2r\sin\phi_{\ast}<\tau_0+ 2d_1 +\tau_1 < 2r\sin\phi_{\ast}+\frac{8.1 r^2\sin\phi_{\ast}}{R}$
and $|d_i- r\sin\phi_{\ast}| < \frac{17r^2\sin\phi_{\ast}}{4R}$,  $i=0, 2$,
we have 
\begin{align}
\Big| \tau_1-d_2 - (d_0 - \tau_0 -2d_1)\Big| 
&<|d_0- r\sin\phi_{\ast}|+  |d_2- r\sin\phi_{\ast}| +   \frac{8.1 r^2\sin\phi_{\ast}}{R}
<  \frac{16.6 r^2\sin\phi_{\ast}}{R},
\label{tau1d2}\\
\Big| \tau_1-\frac{2}{3}d_2  -(\frac{4}{3}d_0 - \tau_0 -2d_1)\Big| 
&< \frac{4}{3}|d_0- r\sin\phi_{\ast}| +  \frac{2}{3} |d_2- r\sin\phi_{\ast}| + \frac{8.1 r^2\sin\phi_{\ast}}{R} 
<  \frac{16.6 r^2\sin\phi_{\ast}}{R}.
\label{tau1d2b}
\end{align}

5). Moreover, we note  that
\begin{enumerate}
\item[a).] since $\tau_0\ge  \frac{2}{3}d_0+ \frac{1}{2}d_1$, we have
$\tau_0 -d_0 - d_1>  - \frac{1}{3}d_0 - \frac{1}{2}d_1
> -   0.7 r\sin\phi_{\ast}$;

\item[b).] since $\tau_0 < d_0 + d_1$, we have
$\tau_0 -\frac{2}{3}d_0 -  \frac{1}{2} d_1 <\frac{1}{3}d_0 + \frac{1}{2}d_1 <  0.7 r\sin\phi_{\ast}$.
\end{enumerate}
Putting these two estimates together, we get that for $R\ge 414 r$,
\begin{align}
|\tau_0 -d_0 - d_1|  & <   0.7 r\sin\phi_{\ast}; \label{tdd}\\
|\tau_0 -d_0 -  \frac{1}{2}d_1| & <   0.7  r\sin\phi_{\ast}; \label{tdpd}\\
|\tau_0 - \frac{2}{3} d_0 -  d_1| & <   0.7 r\sin\phi_{\ast}; \label{tqdd}\\
|\tau_0 -\frac{2}{3}d_0 -  \frac{1}{2}d_1|  & <    0.7 r\sin\phi_{\ast}. \label{tqdpd}
\end{align}

Now we are ready to estimate the four entries of $G$. 
We will argue in the following order: the $(2,1)$-entry, 
the $(1,1)$-entry, the $(2,2)$-entry and the $(1,2)$-entry.

\noindent{\bf The $(2,1)$ entry}.
It follows from \eqref{d1lower} and \eqref{tau0d0r} that
 $0< \frac{3}{2}d_1 - \frac{1}{3}d_0 \le \tau_0 -d_0 + d_1 < d_1 +  \frac{8.3r^2\sin\phi_{\ast}}{R}$.
Combining with \eqref{tdd},  \eqref{tdpd} and  \eqref{tau1d2}, we have
\begin{align}
G_{21}& = 2(\tau_1 -d_1 -d_2)(\tau_0 -d_0-\frac{1}{2}d_1) +2(\tau_1-\frac{1}{2}d_1-d_2)( \tau_0-d_0-d_1)  \nonumber \\
& > 2(d_0 - \tau_0 -3d_1)(\tau_0 -d_0-\frac{1}{2}d_1) +2(d_0 - \tau_0-\frac{5}{2}d_1)( \tau_0-d_0-d_1)
-4  \cdot \frac{16.6r^2\sin\phi_{\ast}}{R}\cdot   0.7 r\sin\phi_{\ast}  \nonumber  \\
&= -4(\tau_0 -d_0 + d_1)^2 + 12 d_1^2 - \frac{46.5 r^3\sin^2\phi_{\ast}}{R}
>  -4(d_1 +  \frac{8.3r^2\sin\phi_{\ast}}{R})^2 + 12 d_1^2 - \frac{46.5 r^3\sin^2\phi_{\ast}}{R} \nonumber  \\
&> -4(2d_1 +  \frac{8.3 r^2\sin\phi_{\ast}}{R})\cdot \frac{8.3 r^2\sin\phi_{\ast}}{R} 
+ 8(\frac {13}{30}r\sin\phi_{\ast})^2  - \frac{46.5 r^3\sin^2\phi_{\ast}}{R}  \nonumber  \\
& > -4\cdot \frac{43}{30}\cdot \frac{8.3 r^3\sin^2\phi_{\ast}}{R} + \frac{338}{225}r^2 \sin^2\phi_{\ast}
 - \frac{46.5 r^3\sin^2\phi_{\ast}}{R} >0.
\end{align}
The last inequality holds for $R \ge 62.7 r$. We have assumed $R \ge 414 r$ 
when obtaining  \eqref{d1lower} and  \eqref{d1upper}.

\noindent{\bf The $(1,1)$ entry}.
It follows from  \eqref{d1lower} and   \eqref{tau0d0r} that
$0< - \frac{1}{2}d_0 + \frac{3}{2}d_1 \le \tau_0 - \frac{7}{6}d_0 + d_1< -\frac{1}{6}d_0  + d_1 +  \frac{8.3 r^2\sin\phi_{\ast}}{R}$.
Combining with \eqref{tdd},  \eqref{tdpd} and  \eqref{tau1d2b}, we have
\begin{align}
G_{11}& =6(\tau_1 -d_1 -\frac{2}{3}d_2)(\tau_0 -d_0-\frac{1}{2}d_1) 
+ 6(\tau_1-\frac{1}{2}d_1-\frac{2}{3}d_2)( \tau_0-d_0-d_1)  \nonumber  \\
& > 6(\frac{4}{3}d_0 - \tau_0 -3d_1)(\tau_0 -d_0-\frac{1}{2}d_1) 
+ 6(\frac{4}{3}d_0 - \tau_0 - \frac{5}{2}d_1)( \tau_0-d_0-d_1)
 -12\cdot \frac{16.6r^2\sin\phi_{\ast}}{R}\cdot  0.7r\sin\phi_{\ast} \nonumber \\
&> -12 (\tau_0 - \frac{7}{6}d_0 + d_1)^2 + \frac{1}{3} d_0^2 -  7d_0 d_1+36 d_1^2
 - \frac{139.5 r^3\sin^2\phi_{\ast}}{R}  \nonumber \\
& > -12 ( -\frac{1}{6}d_0  + d_1 +  \frac{8.3 r^2\sin\phi_{\ast}}{R})^2 
+ \frac{1}{3} d_0^2 - 7 d_0 d_1+36 d_1^2
 - \frac{139.5 r^3\sin^2\phi_{\ast}}{R}  \nonumber  \\
 &= - 3 d_0 d_1+24 d_1^2  + (4d_0  - 24d_1)\frac{8.3 r^2\sin\phi_{\ast}}{R} 
 -\Big(\frac{8.3 r^2\sin\phi_{\ast}}{R}\Big)^2  - \frac{139.5 r^3\sin^2\phi_{\ast}}{R} \nonumber \\
 &> - 3 d_0 d_1+24 d_1^2  - \frac{107.4 r^3\sin^2\phi_{\ast}}{R} - \frac{0.1 r^3\sin^2\phi_{\ast}}{R}
 - \frac{139.5 r^3\sin^2\phi_{\ast}}{R} \\
&> (-\frac{27}{4} + 24) d_1^2 - \frac{247 r^3\sin^2\phi_{\ast}}{R}  
 > 3.2 r^2\sin^2\phi_{\ast} - \frac{247 r^3\sin^2\phi_{\ast}}{R}> 0,
\end{align}
where the second last inequality follows from Eq.~\eqref{d1lower} that $d_1 > \frac{4}{9}d_0$,
and the last inequality holds for $R \ge 77.2 r$. Again we have 
assumed $R\ge 414 r$ in obtaining  \eqref{d1upper}.

\noindent{\bf The $(2,2)$ entry}.
Combining with \eqref{tqdd},  \eqref{tqdpd} and  \eqref{tau1d2}, we have
\begin{align}
G_{22}& =6(\tau_1 -d_1 -d_2)(\tau_0 -\frac{2}{3}d_0 - \frac{1}{2}d_1)
 +6 (\tau_1 -\frac{1}{2}d_1 -d_2)( \tau_0 -\frac{2}{3}d_0 -d_1)  \nonumber  \\
& > 6(d_0 - \tau_0 -3d_1)(\tau_0 -\frac{2}{3}d_0 - \frac{1}{2}d_1)
+ 6(d_0 - \tau_0 - \frac{5}{2}d_1)( \tau_0 -\frac{2}{3}d_0 -d_1)
 -  \frac{139.5 r^3\sin^2\phi_{\ast}}{R}  \nonumber \\
&= -12 (\tau_0 - \frac{5}{6}d_0 + d_1)^2  + \frac{1}{3}d_0^2 - 7 d_0 d_1 + 36 d_1^2
 - \frac{139.5 r^3\sin^2\phi_{\ast}}{R} .
 \label{G22a}
 \end{align}

We will divide the estimate of this term into two subcases according to $\tau_0\le \alpha \cdot d_0$ or not.
To determine a proper value of $\alpha$, we need to 
consider the equation $E_{\alpha}(\lambda) =0$ of $\lambda$, where
\begin{align}
E_{\alpha}(\lambda)
:= & - 12\Big((\alpha-\frac{5}{6})\lambda+1\Big)^2 +\frac{1}{3}\lambda^2 -7\lambda + 36 \\
= & (-8+20\alpha - 12\alpha^2)\lambda^2  + (13- 24 \alpha)\lambda +24.
\end{align}
This function $E_{\alpha}(\lambda)$ appears later in \eqref{neweqG22}.
Note that $-8+20\alpha - 12\alpha^2 >0$ whenever $\alpha\in(\frac{2}{3},1)$. 
The two roots $\lambda_1(\alpha) \le \lambda_2(\alpha)$ of the equation $E_{\alpha}(\lambda) =0$ is
\begin{align}
\lambda_{1,2}(\alpha)
=\frac{-13 + 24 \alpha \pm \sqrt{(13- 24 \alpha)^2 - 4\cdot 24\cdot (-8+20\alpha - 12\alpha^2)}}{2(-8+20\alpha - 12\alpha^2)}.
\end{align}
Note that $\lambda_2(\alpha)>\lambda_1(\alpha)>2.25$ for all $0.7<\alpha<0.989814$.
Moreover, $\frac{2}{\alpha}<\frac{24}{11}$ for any $\alpha> \frac{11}{12} \simeq 0.91667$.
Then any choice of $\alpha  \in (0.91667, 0.989814)$ will work.

\noindent{\bf Case 1.} $\tau_0\le \alpha \cdot d_0$.
Then $0<\tau_0 -\frac{5}{6}d_0 +d_1\le (\alpha-\frac{5}{6})d_0 +d_1$. Continuing from \eqref{G22a}, we have 
 
\begin{align}
G_{22}& >  -12\Big((\alpha-\frac{5}{6})d_0 +d_1\Big)^2+\frac{1}{3}d_0^2-7d_0d_1 + 36d_1^2 
- \frac{139.5 r^3\sin^2\phi_{\ast}}{R}
\label{neweqG22} \\
&=d_1^2\cdot E_{\alpha}\big(\frac{d_0}{d_1}\big) - \frac{139.5 r^3\sin^2\phi_{\ast}}{R}
 \ge \big(\frac{13}{30}r\sin\phi_{\ast} \big)^2  \cdot E_{\alpha}(2.25) - \frac{139.5 r^3\sin^2\phi_{\ast}}{R},
\end{align}
since \eqref{d1lower} and \ref{d1upper} implies 
$\frac{d_0}{d_1} <2.25<\lambda_1(\alpha)$ and $d_1>\frac{13}{30} r\sin\phi_{\ast}$.
Then $G_{22}>0$ for $R\ge \big(\frac{30}{13}\big)^2\frac{139.5  r}{E_{\alpha}(2.25)}$.

\noindent{\bf Case 2.} $\tau_0> \alpha \cdot d_0$. Combining with
$\tau_0< 2d_1$, we have $d_0 <\frac{2}{\alpha} d_1$.
It follows from \eqref{tau0d0r} that
$0<\tau_0 -\frac{5}{6}d_0 +d_1< \frac{1}{6}d_0 +d_1+ \frac{19r^2\sin\phi_{\ast}}{2R}$.
Continuing from \eqref{G22a}, we have 
\begin{align}
G_{22} & > -12(\frac{1}{6}d_0 +d_1+ \frac{8.3 r^2\sin\phi_{\ast}}{R})^2
+\frac{1}{3}d_0^2-7d_0d_1 + 36d_1^2  - \frac{139.5 r^3\sin^2\phi_{\ast}}{R}  \nonumber  \\
&\ge  -\frac{1}{3}d_0^2 - 4d_0d_1 - 12 d_1^2  - \frac{192.5 r^3\sin^2\phi_{\ast}}{R} 
+\frac{1}{3}d_0^2-7d_0d_1 + 36d_1^2  - \frac{139.5 r^3\sin^2\phi_{\ast}}{R} \nonumber \\
&=-11d_0d_1 +24d_1^2   - \frac{332 r^3\sin^2\phi_{\ast}}{R}
= 11d_1^2\cdot \Big(\frac{24}{11} -\frac{d_0}{d_1}\Big)   - \frac{332 r^3\sin^2\phi_{\ast}}{R} \nonumber \\
&>11\cdot \big(\frac{13}{30}r\sin\phi_{\ast} \big)^2 \cdot \Big(\frac{24}{11} -\frac{2}{\alpha}\Big)   - \frac{332 r^3\sin^2\phi_{\ast}}{R},
\end{align}
since $d_0< \frac{31}{30}r\sin\phi_{\ast}$,
$\frac{13}{30}r\sin\phi_{\ast} < d_1<  0.7 r\sin\phi_{\ast} $ from \eqref{d1lower} and \eqref{d1upper}.
Then $G_{22}>0$ for $ R\ge \big(\frac{30}{13}\big)^2\frac{332 r}{(24 -\frac{22}{\alpha})}$.

For certainty, we pick $\alpha=0.9807$.
For this $\alpha$, we have $\lambda_2(\alpha)> \lambda_1(\alpha)> 2.25$,
and $\frac{2}{\alpha}< \frac{24}{11}$.
Then a sufficient condition for $G_{22}>0$ in both cases is $R\ge 1128.3 r$.

\noindent{\bf The $(1,2)$ entry}.
Combining with \eqref{tqdd},  \eqref{tqdpd} and  \eqref{tau1d2b}, we have
\begin{align}
G_{12}&=18(\tau_1 -d_1 -\frac{2}{3}d_2)(\tau_0 -\frac{2}{3}d_0-\frac{1}{2}d_1) 
+ 18(\tau_1 -\frac{1}{2}d_1 -\frac{2}{3}d_2)( \tau_0 -\frac{2}{3}d_0 - d_1)  \nonumber  \\
&>18(\frac{4}{3}d_0 - \tau_0 -3d_1)(\tau_0 -\frac{2}{3}d_0-\frac{1}{2}d_1) 
+ 18(\frac{4}{3}d_0 - \tau_0 -\frac{5}{2}d_1)( \tau_0 -\frac{2}{3}d_0 - d_1) 
 - \frac{418.5  r^3\sin^2\phi_{\ast}}{R}  \nonumber  \\
&=-36(\tau_0 -d_0 +d_1)^2 +4d_0^2 -42 d_0 d_1 + 108 d_1^2  - \frac{418.5  r^3\sin^2\phi_{\ast}}{R}.\label{G12a}
\end{align}

We divide the analysis of $G_{12}$ into two subcases according to $\tau_0\le \alpha \cdot d_0$ or not.
To determine a proper value of $\alpha$, we need to 
consider the  equation $F_{\alpha}(\lambda)=0$ of $\lambda$, where
\begin{align}
F_{\alpha}(\lambda)
&=-36((\alpha -1) \lambda +1)^2 +4\lambda^2 -42\lambda  + 108\\
&=(4-36(1-\alpha)^2) \lambda^2 + (30-72\alpha) \lambda + 72. 
\end{align}
This function $F_{\alpha}(\lambda)$ appears later in \eqref{neweqG12}.
Note that $4-36(1-\alpha)^2>0$ whenever $\alpha\in(\frac{2}{3},\frac{4}{3})$. 
Then the two roots $\lambda_1(\alpha) \le \lambda_2(\alpha)$ of the above equation are
\begin{align}
\lambda_{1,2}(\alpha)
&= \frac{-(30-72\alpha) \pm \sqrt{(30-72\alpha)^2 - 4\cdot 72\cdot(4-36(1-\alpha)^2)}}{2(4-36(1-\alpha)^2)}.
\end{align}
In the particular case that $\alpha=1$,  $F_{1}(\lambda) =4\lambda^2 -42\lambda  + 72$, which appears in
\eqref{F1}.
The two roots of the equation $F_{1}(\lambda)=0$ are
$\lambda_1(1)=\frac{42-6\sqrt{17}}{8}\simeq 2.15676$ and  
$\lambda_2(1)=\frac{42+6\sqrt{17}}{8}\simeq 8.34233$.
So $\frac{2}{\alpha}<\lambda_1(1)$ 
for any $\alpha>\frac{2}{\lambda_1(1)}\simeq 0.926925$.
Note that $\lambda_2(\alpha)>\lambda_1(\alpha)>2.25$ for all $0.7<\alpha<0.985887$.
We  pick $\alpha \in (0.926925, 0.985887)$.

\noindent{\bf Case 1.} $\tau_0\le \alpha d_0$.
Then  $0<\tau_0 -d_0 +d_1\le (\alpha -1) d_0 +d_1$.
Continuing from \eqref{G12a}, we have
\begin{align}
G_{12}&>-36((\alpha -1) d_0 +d_1)^2 +4d_0^2 -42 d_0 d_1 + 108 d_1^2 
 - \frac{418.5  r^3\sin^2\phi_{\ast}}{R}  \label{neweqG12}  \\
&= d_1^2\cdot F_{\alpha}\big(\frac{d_0}{d_1}\big) - \frac{418.5  r^3\sin^2\phi_{\ast}}{R} 
> \big(\frac{13}{30}r\sin\phi_{\ast} \big)^2 \cdot F_{\alpha}(2.25)  - \frac{418.5  r^3\sin^2\phi_{\ast}}{R},
\end{align}
since $\frac{d_0}{d_1} <2.25<\lambda_{1}(\alpha)<\lambda_2(\alpha)$
 and $d_1 > \frac{13}{30}r\sin\phi_{\ast}$ from \eqref{d1lower}.
Then $G_{12}>0$ for $R\ge \big(\frac{30}{13}\big)^2\frac{418.5  r}{F_{\alpha}(2.25)}$.

\noindent{\bf  Case 2.}  $\tau_0> \alpha \cdot d_0$. Combining with
$\tau_0< 2d_1$, we have $d_0 <\frac{2}{ \alpha} d_1$.
It follows from \eqref{tau0d0r} that
$0< \tau_0 - d_0 + d_1 <d_1+ \frac{8.3 r^2\sin\phi_{\ast}}{R}$.
Continuing from \eqref{G12a}, we have (for $R \ge 1000 r$)
\begin{align}
G_{12} &> -36(\frac{8.3 r^2\sin\phi_{\ast}}{R}+d_1)^2 +4d_0^2 -42 d_0 d_1 + 108 d_1^2  - \frac{418.5  r^3\sin^2\phi_{\ast}}{R}  \nonumber  \\
&\ge -36 d_1^2 - \frac{418.4 r^3\sin^2\phi_{\ast}}{R} - \frac{2480.1 r^4\sin^2\phi_{\ast}}{R^2} 
+ 4d_0^2   - 42 d_0 d_1 + 108 d_1^2   - \frac{418.5  r^3\sin^2\phi_{\ast}}{R} \label{F1}  \\
&= d_1^2\cdot F_{1}\big(\frac{d_0}{d_1}\big) - \frac{940 r^3\sin^2\phi_{\ast}}{R}
> \big(\frac{13}{30}r\sin\phi_{\ast} \big)^2 \cdot F_{1}\big(\frac{2}{ \alpha}\big) - \frac{940  r^3\sin^2\phi_{\ast}}{R},
\end{align}
since $\frac{d_0}{d_1}<\frac{2}{ \alpha}<\lambda_1(1) $ and
$\frac{13}{30}r\sin\phi_{\ast}< d_1 < 0.7r\sin\phi_{\ast}$ from \eqref{d1lower} and \ref{d1upper} .
Then $G_{12}>0$ for $R\ge  \big(\frac{30}{13}\big)^2\frac{ 940  r}{F_{1}(2/\alpha)}$.

For certainty, we pick $\alpha=0.9778$.
For this $\alpha$, we have   $\lambda_2(\alpha) >\lambda_1(\alpha) > 2.25$, 
$\frac{2}{\alpha} < \lambda_1(1)$.
Then a sufficient condition for $G_{12}>0$ in both cases is $R\ge 1773.7 r$.

Collecting terms, we see that all four entries of the matrix $G=(d_1d_2) DF^5$ along the orbit segment
$(F^{-1}x_0, x_0, x_1 F x_1, x_2, F x_2)$ are positive for  $R\ge 1773.7 r$. 
Therefore, the orbit segment has positive derivative for  $R\ge 1773.7 r$.
This complete the proof when $\tau_0\ge  \frac{2}{3}d_0+ \frac{1}{2}d_1$.

\subsection{The subcase when $n_1=1$ and $\tau_1\ge \frac{1}{2}d_1 + \frac{2}{3}d_2$}
\label{n1tau1}
There are two ways to deal with the case when $\tau_1\ge \frac{1}{2}d_1 + \frac{2}{3}d_2$:
\begin{enumerate}
\item either we run the same analysis as in \S \ref{n1tau0}  for the second time, 

\item or we use the time reversal property
of the billiard map.
\end{enumerate}
We will explain the second approach in details. 
Let $(F^{-1}x_0, x_0, x_1, F x_1, x_2, F x_2)$ be an orbit segment satisfying
 $\tau_1\ge \frac{1}{2}d_1 + \frac{2}{3}d_2$. 
Recall that the involution map $I:M\to M$ satisfies $F^{n}\circ I = I \circ F^{-n}$ for any $n\in\mathbb{Z}$.
Then the involution orbit of the above orbit segment, re-ordered in the positive direction,  is 
\begin{align}
(IFx_2, Ix_2, IFx_1,  Ix_1, x_0, IF ^{-1}x_0)
=(F^{-1}Ix_2, Ix_2, F^{-1}Ix_1,  Ix_1, x_0, F Ix_0).
\end{align} 
Note that this involution orbit satisfies the condition $\tau_0\ge  \frac{2}{3}d_0+ \frac{1}{2}d_1$, 
since 
\begin{align}
d(Ix)=\rho(\phi)\sin(\pi-\theta)=\rho(\phi)\sin(\theta)=d(x)
\end{align} 
for every $x\in M$. See Fig.~\ref{fig.taularge}, where the left figure can be viewed as the involution orbit
of the one on the right. 
Applying the result in Section \ref{n1tau0}, we see that
all four entries of the matrix $D_{IFx_2}F^5=\begin{bmatrix} a & b \\ c & d \end{bmatrix}$ are positive.
Taking derivatives of the time reversal symmetry equality, we get
\begin{align}
D_{F^{-1}x_0}F^5 &= D_{F^{-1}x_0}(I \circ F^{-5}\circ I)
=D_{F^{-1}Ix_2}I \circ (D_{IFx_2}F^{5})^{-1}\circ D_{F^{-1}x_0}I   \nonumber  \\
&=\frac{1}{ad-bc}\begin{bmatrix} 1 & 0 \\ 0 & -1 \end{bmatrix}
\begin{bmatrix} d & -b \\ -c & a \end{bmatrix}
\begin{bmatrix} 1 & 0 \\ 0 & -1 \end{bmatrix}
=\frac{1}{ad-bc} \begin{bmatrix} d & b \\ c & a \end{bmatrix}.
\end{align}
Note that $ad-bc=\det D_{IFx_2}F^5>0$ 
since $F$ preserves the area form $\omega$ on $M$.
It follows that all four entries of the matrix $D_{F^{-1}x_0}F^5$ are positive for  $R\ge 1773.7 r$.
Therefore, the orbit segment $(F^{-1}x_0, x_0, x_1, F x_1, x_2, F x_2)$ has positive derivative.
This finishes the proof of Proposition \ref{defocusing} (3).

Collecting the lower bounds on $R$, we see that the asymmetric lemon billiards $Q(\phi_{\ast},R)$ is hyperbolic 
for
\begin{align}\label{overall.bound}
R\ge \max \Big\{ \frac{16 r}{\delta_{\ast}\cdot \sin\phi_{\ast}}, \frac{165 r}{\sin^2\phi_{\ast}}, 1773.7 r \Big\}.
\end{align}
This completes the proof of Theorem \ref{mainthm}.

\begin{remark}
The bound on $R$ we got is not optimal, especially in the case when $\phi_{\ast}< \frac{\pi}{3}$.
More precisely, when $\phi_{\ast}< \frac{\pi}{4}$, one can define a slightly different first return set
$\hM' \subset M_{r}$ and show that the orbit segment $(F^{-2}x_0,\dots, F^2 x_2)$
is a subsegment of the orbit segment $(F^kx)_{0\le k\le \sigma'(x)}$, for every $x\in\hM'$ with $d_1< 2r$, 
where $\sigma'(x)$ is the first return time of a point $x\in \hM'$ to $\hM'$.
Then most of the estimates in Section \ref{n1=0} and Section \ref{n1ge1}
 can be significantly improved.
\end{remark}

\section*{Acknowledgements}
The authors are very grateful to the anonymous referee for useful comments and
suggestions, which helped them to improve the presentation of the paper.

\end{document}